\documentclass[a4paper,12pt]{ETHpaper}
	
	\usepackage{graphicx}\usepackage[dvipsnames]{xcolor}\usepackage{dcolumn}\usepackage{bm}
	\usepackage[british]{babel}
	\usepackage{subfigure}
	\usepackage{amssymb,amsmath,amsthm}
	\usepackage[capitalise]{cleveref}
	
	\newcommand{\N}{\mathbb{N}}
	\newcommand{\R}{\mathbb{R}}
	\newcommand{\G}{\mathcal{G}}
	\newcommand{\expected}[1]{\mathbb{E}\left[ #1 \right]}
	 \newcommand{\abs}[1]{\left| #1 \right|}
	
	\newcommand{\kseq}{\mathbf{k}}
	\newcommand{\dir}[1]{#1^{\scriptscriptstyle\updownarrow}}

	\theoremstyle{definition}
	\newtheorem{defn}{Definition}
	
	\theoremstyle{plain}
	\newtheorem{thm}{Theorem}
	\newtheorem{lemma}[thm]{Lemma}
	\newtheorem{cor}{Corollary}[thm]
	\crefname{cor}{Corollary}{Corollaries}

\begin{document}

\title{Generalised hypergeometric ensembles of random graphs:\\
the configuration model as an urn problem}
\titlealternative{Generalised hypergeometric ensembles of random graphs}

\author{Giona Casiraghi\(^*\) \qquad Vahan Nanumyan\(^\dagger\)}
\authoralternative{G. Casiraghi, V. Nanumyan}
\address{Chair of Systems Design, \\ ETH Zurich, 
Weinbergstrasse 56/58, 8092 Zurich, Switzerland \\[2mm]
\(^*\)gcasiraghi@ethz.ch\\
\(^\dagger\)vnanumyan@ethz.ch
}
\www{\url{http://www.sg.ethz.ch}}
\maketitle

\begin{abstract}
	We introduce a broad class of random graph models: the generalised hypergeometric ensemble (GHypEG).
	This class enables to solve some long standing problems in random graph theory.
	First, GHypEG provides an elegant and compact formulation of the well-known configuration model in terms of an urn problem.
	Second, GHypEG allows to incorporate arbitrary tendencies to connect different vertex pairs.
	Third, we present the closed-form expressions of the associated probability distribution ensures the analytical tractability of our formulation.
	This is in stark contrast with the previous state-of-the-art, which is to implement the configuration model by means of computationally expensive procedures.
	
	\vspace{1em}
	\textbf{Keywords}: random graph, network ensemble, configuration model, Wallenius' non-central hypergeometric distribution
\end{abstract}

\section{Introduction}

Important features of real-world graphs are often analysed by studying the deviations of empirical observations from suitable random models.
Such models, or graph ensembles, are built so that some of the properties of the analysed empirical graph are preserved.
Then, one can identify which other features of the empirical graph can be expected at random and which cannot, given the encoded constraints.

The simplest random graph model, named after Erd\"os and R\'enyi, generates edges between a given number of vertices with a fixed probability \(p \in (0,1]\)~\cite{erdds1959random}.
In this model, the properties of vertices, such as their degrees, have all the same expected value.
However, most empirical graphs have heterogeneous, heavy tailed degree distributions~\cite{Aiello2000,Huberman1999,Chung2002a,barabasi1999emergence}.
Hence, random graph models able to incorporate arbitrary degree sequences are of special importance.

The most common model fixing only degree sequences is known as the \emph{configuration model} of random graphs~\cite{Chung2002a,Chung2002,Bender1978,Molloy1995}.
The comparison of an empirical graph with the corresponding configuration model then allows to quantify which properties of the original graph can be ascribed to the degree sequence.
The properties not explained by the degree sequence highlight the unique structure of the studied empirical graph.

The standard configuration model has a crucial drawback: the lack of analytical tractability.
In fact, the model is realised by means of a repeated rewiring procedure.
Each vertex is assigned a number of half-edges, or stubs, corresponding to their degree and one random realisation of the model is obtained by wiring pairs of stubs together uniformly at random.
This is a computationally expensive procedure, which does not allow to explore the whole probability space of the model.
This problem is exacerbated in the case of larger graphs and graphs with highly heterogeneous degree distributions.

Moreover, the standard configuration model is limited to the coarse analysis of the combinatorial randomness arising from vertex degrees, as it has no free parameters.
While it has been invaluable for the macroscopic and the mesoscopic analysis of graphs, such as for graph partitioning through modularity maximisation~\cite{Newman2006} and for quantifying degree correlations~\cite{Newman2003}, other graph models are needed to address complex dyadic patterns beyond degrees.

In this article, we propose a novel analytically tractable model for random graphs with given expected degree sequences.
The formulation of the model relies on mapping the process of drawing edges to a multivariate urn problem.
In its simplest case, our model corresponds to the configuration model for directed or undirected multi-edge graphs, fixing in expectation the values of vertex degrees instead of their exact values.
In the general case, the model incorporates a parameter for each pair of vertices, which we call \emph{edge propensity}.
This parameter controls the relative likelihood of drawing an individual edge between the respective pair of vertices, as opposed to any other pair.
This is achieved by biasing the combinatorial edge drawing process.
The proposed formulation allows to model and test for arbitrary graph patterns that can be reduced to dyadic relations.

\section{Results}

Let us consider a \emph{multi-graph} \(\G=(V,E)\), where \(V\) is a set of \(n\) vertices, and \(E \subseteq V \times V\) is a multi-set of \(m\) (directed or undirected) multi-edges.
For clarity, we first provide working definitions of multi-edges and multi-graphs.
\begin{defn}[Multi-edges]
	Let \(V\subset\N\) be a set of vertices and \(i,j\in V\) two vertices.
	Elements \((i,j)_l \in E\) and \((i,j)_{k} \in E\), \(l\neq k\), incident to the same two vertices are called multi-edges.
	The number \(A_{ij}\in\N_0\)  of multi-edges incident to the same two vertices \(i\) and \(j\) defines the \emph{multiplicity} of the edge  \((i,j)\).
\end{defn}

\begin{defn}[Multi-graph]
	Let \(V\subset\N\) be a set of vertices.
	A graph \(\mathcal G(V,E)\) is called a multi-graph if \(E\subseteq V\times V\) is a \emph{multi-set} of \(m :=\abs{E}\) multi-edges.
	Self-loops \((i,i)\in E\) for \(i\in V\) are generally allowed.
	A multi-graph \(\G\) can be directed or undirected.
\end{defn}

We indicate with \(\mathbf A\) the adjacency matrix of the graph where entries \(A_{ij}\in \N_{0}\) capture the multiplicity of an edge \((i,j)\in V \times V\) in the multi-set \(E\).
In the case of undirected graphs, the adjacency matrix is symmetric, i.e. \(\mathbf{A} = \mathbf{A}^T \), and the elements on its diagonal equal twice the multiplicity of the corresponding self-loops.

\begin{defn}[Degrees]
	For each vertex \(i \in V\) the in-degree \(k^{\mathrm{in}}_i := \sum_{j \in V} A_{ji}\) and the out-degree \(k^{\mathrm{out}}_i := \sum_{j \in V} A_{ij}\).
	The total number of multi-edges is expressed as \(m=\sum_{i,j \in V}A_{ij}=\sum_{i \in V}k^{\mathrm{out}}_i = \sum_{j \in V}k^{\mathrm{in}}_j\).
	We denote the in-degree and out-degree \emph{sequence} of a directed graph \(\G\) as \(\kseq^\mathrm{in}(\G)=\{k^{\mathrm{in}}_i\}_{i\in V}\) and \(\kseq^\mathrm{out}(\G)=\{k^{\mathrm{out}}_i\}_{i\in V}\).
	We denote with \(k^\mathrm{in/out}_i(\G)\) the \(i\)-th entry of the degree sequence \(\kseq^\mathrm{in/out}(\G)\), corresponding to the  in- or out-degree of vertex \(i\).
	For undirected graphs, the adjacency matrix is symmetric and thus \(k^{\mathrm{in}}_i=k^{\mathrm{out}}_i=: k_i\). Hence, there is one degree sequence of an undirected graph, which we denote \(\kseq(\G)\).
	
\end{defn}

As we only deal with multi-graphs,  we will refer to multi-graphs simply as graphs  in the rest of the article.

\subsection{Soft Configuration Model}\label{sec:conf}

The concept underlying our random graph model is the same as for the standard configuration model of Molloy and Reed~\cite{Molloy1995,molloy_reed_1998}, which is to randomly shuffle the multi-edges of a graph \(\G\) while preserving vertex degrees.
The standard configuration model generates multi-edges one after another by sampling uniformly at random a vertex with an available out-stub (outwards half-edge) and a vertex with an available in-stub (inwards half-edge), until all stubs are consumed. 
\Cref{fig:softconf} illustrates one step of this process.
The resulting random graphs all have exactly the same degree sequence as the original graph \(\G\).
All pairs of available in- and out-stubs are equiprobable to be picked, so are the corresponding individual multi-edges.
Therefore, the probability to observe a multi-edge between a given pair of vertices positively relates to the number of possible stub pairings of the two vertices, which in turn is defined by the corresponding degrees of these.
In fact, this probability depends only on the degrees of the two vertices and on the total number of multi-edges in the graph.
\begin{figure}[tph]
	\centering
	\includegraphics[width=.85\textwidth]{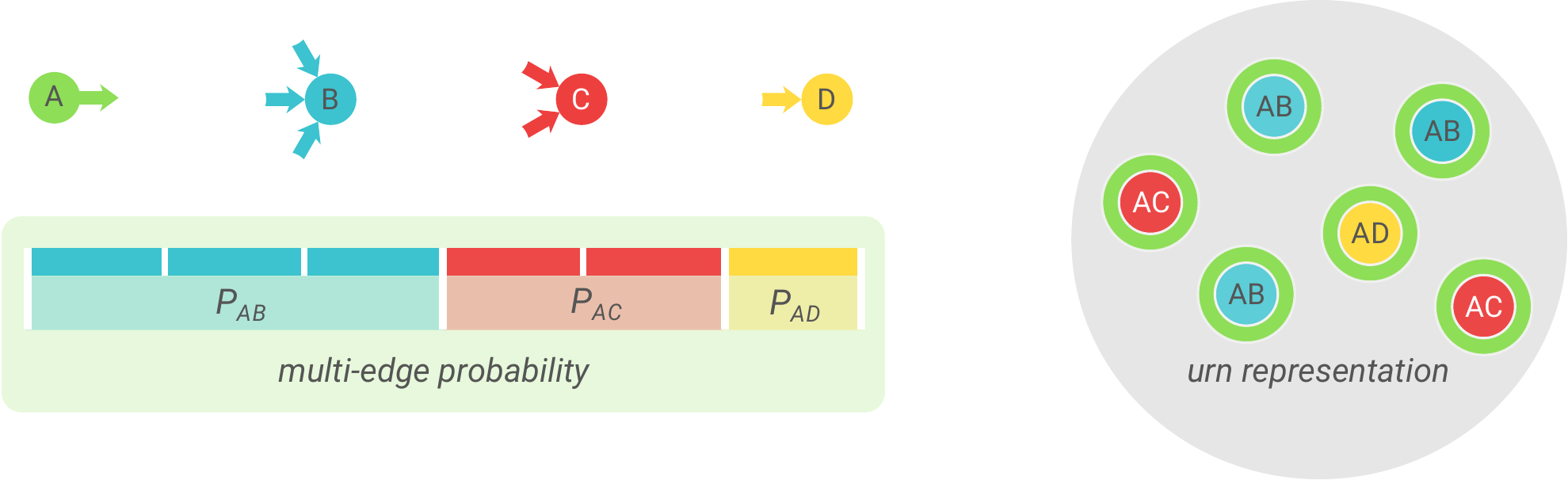}
	\caption{
	The configuration model represented (\textbf{upper left}) as a conventional edge rewiring process and (\textbf{right}) as an urn problem.
	In the former case, once the out-stub \((A,\cdot)\) has been sampled for rewiring, then one in-stub is sampled uniformly at random from those available, to draw a new multi-edge.
	If we represent each possible combination of an out-stub and an in-stub as a ball, we arrive at the urn problem without replacement.
	For the shown vertices, the odds of observing a multi-edge \((A,B)\) are three times higher than of observing a multi-edge between \((A,D)\) and \(1.5\) times higher than of observing a multi-edge between \((A,C)\) in both model representations.
	}\label{fig:softconf}
\end{figure}

The need to consequently sample two vertices at each step makes it cumbersome to analytically formulate the procedure described above.
To overcome this challenge, we take an \emph{edge-centric approach} of sampling \(m\) multi-edges from a certain larger multi-set, which we define below in \cref{def:xi}.
As a consequence of this change of perspective the model will preserve the \emph{expected} degree sequences instead of the exact ones.
To this end, we introduce the definition of the \emph{soft configuration model}.

For each pair of vertices \(i,j\in V\), we define the number \(\Xi_{ij}\) of stub combinations that exist between vertices \(i\) and \(j\), which can be conveniently represented in matrix form:

\begin{defn}[Combinatorial matrix] \label{def:xi}
	We define combinatorial matrix \(\mathbf\Xi\in\N^n\times\N^n\) for graph \(\G\) as the matrix whose elements \(\Xi_{ij}\) are defined as
	\begin{equation}
		\label{eq:xi}
		\Xi_{ij} = k^{\mathrm{out}}_i(\G) k^{\mathrm{in}}_j(\G) \quad \mathrm{for~} i,j \in V,
	\end{equation}
	where  \(\kseq^{\mathrm{out}}(\G)\) and \(\kseq^{\mathrm{in}}(\G)\) are the out-degree and in-degree sequences of the graph \(\G\).
	\end{defn}

\subsubsection{Soft configuration model for directed graphs}

\begin{defn}[Directed soft configuration model]\label{def:softconf}
	Let \(\hat \kseq^{\mathrm{in}},\,\hat \kseq^{\mathrm{out}}\in\N^n\) be in- and out-degree sequences and \(\hat V\) a set of \(n\) vertices.
	The soft configuration model \(X\) generated by \((\hat V,\hat \kseq^{\mathrm{in}},\,\hat \kseq^{\mathrm{out}})\) is the \(n^2\)-dimensional random vector \(X\) defined on the probability space \((S,P)\) with sample space
	\begin{equation}
		\label{eq:softconf}
		S=\left\{ \G(V,E) \,\big\vert\, \abs{E} = m \right\},\quad m = \sum_{i\in V}{\hat \kseq^{\mathrm{in}}_i}=\sum_{i\in V}{\hat \kseq^{\mathrm{out}}_i},
	\end{equation}
	with some probability measure \(P\), such that the expected degree sequences of a realisation of \(X\) are fixed:
	\begin{equation}
		\mathbb E_P\left[  \kseq^{\mathrm{in}}(X)\right]=\hat \kseq^{\mathrm{in}},\quad \mathbb E_P\left[  \kseq^{\mathrm{out}}(X)\right]=\hat \kseq^{\mathrm{out}}.
	\end{equation}
\end{defn}

This set-up allows to map the model to an \emph{urn problem} and thus to arrive to a closed-form probability distribution function for it.

\begin{lemma}[Number of stub combinations]
	\label{lem:xidir}
	The combinatorial matrix \(\mathbf\Xi\in\N^n\times\N^n\) given in \cref{def:xi} encodes the numbers of out-stub and in-stub combinations for each pair of vertices, given degree sequences \(\kseq^{\mathrm{out}}\) and \(\kseq^{\mathrm{in}}\).
	\end{lemma}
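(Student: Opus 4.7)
The plan is to prove this by a direct application of the rule of product (multiplication principle) from elementary combinatorics. The key observation is that the definition of a stub combination between an ordered pair of vertices $(i,j)$ is an ordered pair consisting of one out-stub attached to $i$ and one in-stub attached to $j$, and these two choices are independent.

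First, I would recall the stub construction underlying the configuration model: by definition of the degree, each vertex $i \in V$ is assigned exactly $k^{\mathrm{out}}_i(\G)$ distinguishable out-stubs and $k^{\mathrm{in}}_i(\G)$ distinguishable in-stubs. A directed multi-edge from $i$ to $j$ is then realised by pairing one out-stub incident to $i$ with one in-stub incident to $j$.

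Next, I would count. Fix an ordered pair $(i,j) \in V \times V$. Any stub combination producing a potential multi-edge from $i$ to $j$ is specified by choosing (a) an out-stub at $i$, for which there are $k^{\mathrm{out}}_i(\G)$ options, and (b) an in-stub at $j$, for which there are $k^{\mathrm{in}}_j(\G)$ options. Since the two choices are made independently from disjoint pools, the rule of product gives a total of $k^{\mathrm{out}}_i(\G)\,k^{\mathrm{in}}_j(\G)$ distinct stub combinations, which by \cref{def:xi} is precisely $\Xi_{ij}$. Summing over $(i,j)$ recovers $\sum_{i,j} \Xi_{ij} = \bigl(\sum_i k^{\mathrm{out}}_i\bigr)\bigl(\sum_j k^{\mathrm{in}}_j\bigr) = m^2$, which is a useful sanity check: it equals the total number of ordered out-stub/in-stub pairs in the whole graph.

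There is no real obstacle here; the statement is essentially a bookkeeping lemma whose content lies in fixing notation and justifying the urn-problem reformulation that follows. The only subtlety worth flagging explicitly is that self-loops $(i,i)$ are handled by the same formula, giving $\Xi_{ii} = k^{\mathrm{out}}_i\,k^{\mathrm{in}}_i$ stub combinations at vertex $i$ — consistent with the convention that out-stubs and in-stubs at the same vertex form disjoint sets and can be freely paired.
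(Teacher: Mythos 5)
Your proof is correct and follows essentially the same argument as the paper: identify out-stubs with out-degree and in-stubs with in-degree, then apply the multiplication principle to conclude that the number of stub pairings between \(i\) and \(j\) is \(k^{\mathrm{out}}_i(\G)\,k^{\mathrm{in}}_j(\G)=\Xi_{ij}\). Your added sanity check that \(\sum_{i,j}\Xi_{ij}=m^2\) and the remark on self-loops are consistent with the paper but not needed for the lemma.
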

\begin{proof}
	Let \(\kseq^{\mathrm{out}}_i\) be the out-degree of vertex \(i\) and \(\kseq^{\mathrm{in}}_j\) the in-degree of vertex \(j\).
	The number of out-stubs of a vertex corresponds to its out-degree.
	Similarly, the number of in-stubs of a vertex corresponds to its in-degree.
	Each one of the \(\kseq^{\mathrm{out}}_i\) out-stubs can be connected to all \(\kseq^{\mathrm{in}}_j\) in-stubs.
	Hence, the total number of stub combinations between vertices \(i\) and \(j\) \(\Xi_{ij}\) is equal to \(\kseq^{\mathrm{out}}_i \kseq^{\mathrm{in}}_j\).
\end{proof}

We further introduce the concept of \emph{induced random model}.
\begin{defn}[Graph-induced random model]
	We say that the graph \(\G(V,E)\) induces the random model \(X\), if the quantities \((\hat V,\hat \kseq^{\mathrm{in}},\hat \kseq^{\mathrm{out}})\) generating \(X\) are computed from \(\G\).
	I.e., \(\hat V=V\), \(\hat \kseq^{\mathrm{in}}=\kseq^{\mathrm{in}}(\G)\), and \(\hat \kseq^{\mathrm{out}}=\kseq^{\mathrm{out}}(\G)\).
\end{defn}

Under this assumption, we can formulate the following theorem for the distribution of the soft configuration model \(X\).
To keep the notation simple, we will not distinguish between the \(n\times n\) adjacency matrix \(\mathbf{A}\) and the vector of length \(n^{2}\) obtained by stacking it by row or column.
Similarly, we do the same for all other related \(n\times n\)  matrices.
\begin{thm}\label{thm:softconf}
	Let \(\mathcal G(V,E)\) be a directed graph with \(n=\abs{V}\) vertices and \(m=\abs{E}\) multi-edges.
	Let \(\kseq^{\mathrm{in}}(\G)\in\N^n\) and \(\kseq^{\mathrm{out}}(\G)\in\N^n\) be the vectors representing its in-degree and out-degree sequences.
	Let X be the soft configuration model induced by \(\mathcal G\) defined as in \cref{def:softconf}.
	If the probability measure \(P\) depends only on the degree sequences in \(\G\) and the total number of multi-edges \(m=\abs{E}\) and all multi-edges are equiprobable, then \(X\) follows the multivariate hypergeometric distribution as in \cref{eq:hyper}.

	Let \(\mathbf A\in \N_0^n\times\N_0^n\) be an adjacency matrix and \(\mathbf\Xi\in \N^n\times\N^n\) be the \emph{combinatorial matrix} induced by \(\G\).
	Then the soft configuration model \(X\) is distributed as follows:
	\begin{equation}\label{eq:hyper}
		\Pr(X=\G) = \frac{\prod_{i,j\in V}\dbinom{\Xi_{ij}}{A_{ij}}}{\dbinom{M}{m}},
	\end{equation}
	where \(M=\sum_{i,j\in V}\Xi_{ij}\) is the total number of stub combinations between all vertices.
\end{thm}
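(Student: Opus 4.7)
The plan is to recast the edge-centric sampling as a classical multivariate hypergeometric urn problem. First, I would build the urn explicitly: by \cref{lem:xidir}, the $k^{\mathrm{out}}_i(\G)$ out-stubs of $i$ and the $k^{\mathrm{in}}_j(\G)$ in-stubs of $j$ give rise to exactly $\Xi_{ij}=k^{\mathrm{out}}_i k^{\mathrm{in}}_j$ distinguishable out-stub/in-stub pairs, each of which, once selected, contributes one directed multi-edge from $i$ to $j$. Labelling each such pair by its endpoint type $(i,j)$ yields an urn containing $M=\sum_{i,j\in V}\Xi_{ij}$ distinguishable balls, partitioned into classes of sizes $\Xi_{ij}$.

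Next, I would argue that drawing $m$ multi-edges is equivalent to drawing $m$ balls from this urn \emph{without replacement}. The without-replacement aspect encodes that each individual half-edge may participate in at most one full multi-edge of a realisation, so no stub pair is used twice. The hypothesis that $P$ depends only on the degrees and on $m$, combined with the equiprobability of multi-edges, then forces the induced distribution on $m$-subsets of balls to be uniform over all $\binom{M}{m}$ such subsets. The number of subsets that realise a prescribed adjacency matrix $\mathbf{A}$ is obtained by choosing, for each pair $(i,j)$ independently, $A_{ij}$ of the $\Xi_{ij}$ balls of type $(i,j)$, giving $\prod_{i,j}\binom{\Xi_{ij}}{A_{ij}}$ favourable outcomes. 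Dividing by $\binom{M}{m}$ produces the formula in \cref{eq:hyper}.

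Finally, I would check that the resulting distribution satisfies the expected-degree constraints of \cref{def:softconf}. The key identity is
\[
M=\sum_{i,j\in V}k^{\mathrm{out}}_i k^{\mathrm{in}}_j=\Bigl(\sum_{i\in V} k^{\mathrm{out}}_i\Bigr)\Bigl(\sum_{j\in V} k^{\mathrm{in}}_j\Bigr)=m^{2}.
\]
Since the marginal expectation of a multivariate hypergeometric is $\mathbb{E}[A_{ij}]=m\,\Xi_{ij}/M$, summing over $j$ yields $\mathbb{E}[k^{\mathrm{out}}_i(X)]=m\,k^{\mathrm{out}}_i(\G)\cdot m/m^{2}=k^{\mathrm{out}}_i(\G)$, and analogously $\mathbb{E}[k^{\mathrm{in}}_j(X)]=k^{\mathrm{in}}_j(\G)$, confirming consistency with \cref{def:softconf}.

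The main obstacle is conceptual rather than combinatorial: one must justify carefully why ``equiprobable multi-edges'' translates into uniformity over $m$-subsets of stub pairs and why the draw is without replacement, i.e.\ that the edge-centric viewpoint is the right reinterpretation of the standard stub-matching procedure illustrated in \Cref{fig:softconf}. Once those modelling steps are pinned down, the argument reduces to the textbook counting identity for the multivariate hypergeometric distribution.
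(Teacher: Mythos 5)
Your proposal is correct and follows essentially the same route as the paper: it maps the edge-centric sampling to an urn containing $\Xi_{ij}$ balls of colour $(i,j)$, samples $m$ balls without replacement, and identifies the resulting distribution as the multivariate hypergeometric in \cref{eq:hyper}, with your counting of favourable $m$-subsets making explicit what the paper states by appeal to that distribution. Your final check of the expected-degree constraint (using $M=m^2$) is not part of the paper's proof of \cref{thm:softconf} but reproduces what the paper establishes separately in \cref{cor:expsoftdir}, so it is a harmless addition.
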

\begin{proof}
	
	We want to sample \(m\) multi-edges connecting any of the in- and out-stub pairs such that all such such multi-edges are equiprobable.
	According to \cref{lem:xidir}, the total number of stubs combinations between any two vertices \(i,j\) is given by \(\Xi_{ij}\).
	We can hence define the random graph model as follows.
	We sample \(m\) multi-edges without replacement from the multi-set of size \(\sum_{i,j\in V}\Xi_{ij}\) that combines all the possible stub pairs combinations \(\Xi_{ij}\) between all pairs \(i,j \in V\).
	We sample without replacement because we need to mimic the process of wiring stubs.
	Once a stub pair has been used it cannot be sampled again.
	We can view this model as an \emph{urn problem} where the edges to be sampled are represented by balls in an urn.
	By representing the multi-edges connecting each different pair of vertices \((i,j)\) as balls of a unique colour, we obtain an urn with a total of \(M=\sum_{i,j\in V}\Xi_{ij}\) balls of \(n^{2}=\abs{V\times V}\) different colours.
	With this, the sampling of a graph according to our model corresponds to drawing exactly \(m\) balls from this urn.
	Each adjacency matrix \(\mathbf{A}\) with \(\sum_{i,j\in V} A_{ij}=m\) corresponds to one particular realisation drawn from this model.
	The probability to draw exactly \(\mathbf{A}=\{A_{ij}\}_{i,j \in  V}\) edges between each pair of vertices is given by the multivariate hypergeometric distribution.
\end{proof}

From \cref{thm:softconf} we derive the following results, whose proofs follow directly from properties of the hypergeometric distribution.
\begin{cor}\label{cor:margdir}
	For each pair of vertices \(i,j \in V\), the probability that \(X\) has exactly \({A}_{ij}\) edges between \(i\) and \(j\) is given by the marginal distributions of the multivariate hypergeometric distribution, i.e.
	\begin{equation}
		\label{eq:hypergeometricEdge}
		\Pr(X_{ij}={A}_{ij}) = \frac{\dbinom{\Xi_{ij}}{{A}_{ij}}\dbinom{M-\Xi_{ij}}{m-{A}_{ij}}}{\dbinom{M}{m}}.
	\end{equation}
	\end{cor}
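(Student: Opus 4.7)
The plan is to obtain the marginal for \(X_{ij}\) by collapsing every urn colour other than the one corresponding to the pair \((i,j)\) into a single auxiliary colour, thereby reducing the multivariate hypergeometric law of \cref{thm:softconf} to a univariate hypergeometric experiment. Concretely, the urn from the proof of \cref{thm:softconf} contains \(M=\sum_{k,l\in V}\Xi_{kl}\) balls; of these, exactly \(\Xi_{ij}\) represent stub combinations between \(i\) and \(j\), while the remaining \(M-\Xi_{ij}\) represent stub combinations of every other pair. Since \cref{thm:softconf} identifies \(X\) with the vector of colour counts obtained by drawing \(m\) balls from this urn without replacement, the count \(X_{ij}\) is by construction univariate hypergeometric with parameters \((M,\Xi_{ij},m)\), which is exactly \cref{eq:hypergeometricEdge}.

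A direct computational verification amounts to summing the joint probability in \cref{eq:hyper} over all adjacency matrices in which \(A_{ij}\) is held fixed. The product \(\prod_{k,l\in V}\binom{\Xi_{kl}}{A_{kl}}\) factors as \(\binom{\Xi_{ij}}{A_{ij}}\) times a product over the remaining \(n^{2}-1\) pairs, and summing that residual product over all nonnegative integer tuples \(\{A_{kl}\}_{(k,l)\neq(i,j)}\) subject to \(\sum_{(k,l)\neq(i,j)}A_{kl}=m-A_{ij}\) yields \(\binom{M-\Xi_{ij}}{m-A_{ij}}\) by iterated application of the Chu--Vandermonde identity. Dividing by \(\binom{M}{m}\) then recovers \cref{eq:hypergeometricEdge}.

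No genuine obstacle is expected, since this is the standard marginalisation property of the multivariate hypergeometric distribution, a fact the corollary's statement already advertises. The only care needed is bookkeeping: one must verify that collapsing colours preserves the urn's total \(M\) and that the global constraint \(\sum_{k,l\in V}A_{kl}=m\) translates into the correct complementary count \(m-A_{ij}\) for the auxiliary colour, so that the two binomial coefficients in the numerator of \cref{eq:hypergeometricEdge} are mutually consistent. I would opt for the first, combinatorial argument in the write-up, since it makes the urn-based interpretation transparent and sidesteps any multi-index manipulation.
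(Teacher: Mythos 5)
Your proposal is correct and follows the same route as the paper, which simply invokes the marginalisation property of the multivariate hypergeometric distribution; your colour-collapsing argument (and the Vandermonde verification) just makes that standard property explicit. No discrepancy with the paper's reasoning.
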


\begin{cor}\label{cor:expsoftdir}
	The \emph{expected} in- and out-degree sequences of realisations of the directed soft configuration model \(X\) correspond to the respective degree sequences of the graph \(\G\) inducing \(X\).
\end{cor}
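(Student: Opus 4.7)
The plan is to prove the corollary by a direct computation using linearity of expectation and the known mean of the (univariate) hypergeometric marginals provided by \cref{cor:margdir}. There is no deep step here: the whole statement is an arithmetic consequence of the fact that the combinatorial matrix factorises as $\Xi_{ij} = k^{\mathrm{out}}_i(\G)\,k^{\mathrm{in}}_j(\G)$.

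First I would write, for each vertex $i\in V$,
\begin{equation*}
\mathbb{E}_P\!\left[ k^{\mathrm{out}}_i(X)\right] \;=\; \sum_{j\in V}\mathbb{E}_P[X_{ij}],
\end{equation*}
which is immediate from the definition of out-degree and linearity. Next, by \cref{cor:margdir} each $X_{ij}$ is hypergeometric with population size $M$, number of ``successes'' $\Xi_{ij}$, and $m$ draws, hence has mean $m\,\Xi_{ij}/M$. Substituting $\Xi_{ij}=k^{\mathrm{out}}_i(\G)\,k^{\mathrm{in}}_j(\G)$ gives
\begin{equation*}
\mathbb{E}_P\!\left[ k^{\mathrm{out}}_i(X)\right] \;=\; \frac{m}{M}\,k^{\mathrm{out}}_i(\G)\sum_{j\in V}k^{\mathrm{in}}_j(\G) \;=\; \frac{m^2}{M}\,k^{\mathrm{out}}_i(\G).
\end{equation*}

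The final step is to observe that $M=m^2$. Indeed,
\begin{equation*}
M \;=\; \sum_{i,j\in V}\Xi_{ij} \;=\; \Bigl(\sum_{i\in V}k^{\mathrm{out}}_i(\G)\Bigr)\!\Bigl(\sum_{j\in V}k^{\mathrm{in}}_j(\G)\Bigr) \;=\; m\cdot m,
\end{equation*}
using the identity $m=\sum_i k^{\mathrm{out}}_i(\G)=\sum_j k^{\mathrm{in}}_j(\G)$ from the definition of the degrees. Plugging this back yields $\mathbb{E}_P[k^{\mathrm{out}}_i(X)]=k^{\mathrm{out}}_i(\G)$, and the verbatim symmetric argument (fixing $j$ and summing over $i$) gives $\mathbb{E}_P[k^{\mathrm{in}}_j(X)]=k^{\mathrm{in}}_j(\G)$.

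There is no real obstacle. The only point worth checking carefully is the consistency of the hypergeometric-mean formula with the multivariate setup, which is exactly what \cref{cor:margdir} supplies; everything else reduces to rearranging a double sum and using that the sum of out-degrees equals the sum of in-degrees equals $m$.
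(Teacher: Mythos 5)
Your proposal is correct and follows essentially the same route as the paper: compute $\expected{X_{ij}}=m\,\Xi_{ij}/M$ from the hypergeometric marginals, sum over rows and columns, and use the factorisation $\Xi_{ij}=k^{\mathrm{out}}_i(\G)\,k^{\mathrm{in}}_j(\G)$ together with $\sum_i k^{\mathrm{out}}_i=\sum_j k^{\mathrm{in}}_j=m$. The only cosmetic difference is that you make the identity $M=m^2$ explicit, which the paper leaves implicit in the final simplification.
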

\begin{proof}
	For each pair of vertices \(i,j\) we can calculate the expected number of multi-edges \(\expected{X_{ij}}\) as
	\begin{equation}
		\expected{X_{ij}} = m \frac{\Xi_{ij}}{M}
	\end{equation}
	Moreover, summing the rows and columns of matrix \(\expected{X_{ij}}\) and assuming directed graphs with self-loops we can calculate the expected in- or out-degrees of all vertices as
	\begin{equation}
		\label{eq:hypergeometricDegree}
		\begin{aligned}
			\expected{k^{\mathrm{in}}_j(X)} =& \sum_{i \in V}{\expected{X_{ij}}} = m \frac{\sum_{i \in V} \hat{k}^{\mathrm{out}}_i \hat{k}^{\mathrm{in}}_j}{M} = \hat{k}^{\mathrm{in}}_j,  \\
			\expected{k^{\mathrm{out}}_i(X)} =& \sum_{j \in V}{\expected{X_{ij}}} = m \frac{\sum_{j \in V} \hat{k}^{\mathrm{out}}_i \hat{k}^{\mathrm{in}}_j}{M} = \hat{k}^{\mathrm{out}}_i.
		\end{aligned}
	\end{equation}

	\Cref{eq:hypergeometricDegree} confirms that the \emph{expected}  in- and out-degree sequence of realisations drawn from \(X\) corresponds to the degree sequence of the given graph \(\G\).
\end{proof}

\subsubsection{Soft configuration model for undirected graphs}

So far we have discussed the configuration model for directed graphs.
Specifying the undirected case, on the other hand, requires some more efforts.
The reason for this is that, under the assumptions described in the previous section, the undirected version of the soft configuration model is the degenerate case of its directed counterpart, where the direction of the multi-edges is ignored.
In particular, this implies that the random vector corresponding to the undirected model has half the dimensions of the directed one, because any undirected multi-edge between two vertices \(i\) and \(j\) can either be generated as a directed multi-edge \((i,j)\) or as a directed multi-edge \((j,i)\).

\begin{defn}[Undirected soft configuration model]\label{def:softconf-undir}
	Let \(\hat \kseq\in\N^n\) be a degree sequence and \(\hat V\) a set of \(n\) vertices.
	The soft configuration model \(X\) generated by \((\hat V,\hat \kseq)\) is the \((n^2+n)/2\)-dimensional random vector \(X\) defined on the probability space \((S,P)\), for the sample space
	\begin{equation}
		\label{eq:softconf-undir}
		S=\left\{ \G(V,E) \,\big\vert\, \abs{E} = m \right\},\quad 2m = \sum_{i\in V}{\hat \kseq_i},
	\end{equation}
	with some probability measure \(P\), such that the expected degree sequence of a realisation of \(X\) is fixed:
	\begin{equation}
		\mathbb E_P\left[  \kseq(X)\right]=\hat \kseq.
	\end{equation}
\end{defn}

At first sight, it would appear that the undirected soft configuration model can simply be obtained by restricting the directed model to \(n(n+1)/2\) components corresponding to the upper-triangle and the diagonal of the adjacency matrix.
That is, we would sample \(m\) multi-edges among pairs \(i\leq j \in V\) from the multi-set of stub combinations \(\Xi_{ij}\) as defined in \cref{eq:xi}.
However, the resulting model does not satisfy \cref{def:softconf-undir}, because its expected degree sequence does not equal the degree sequence that induced it.
To show this, we follow the same reasoning adopted in the proof of \cref{cor:expsoftdir}.
The expected degree \(\expected{k_i(X)}\) of a vertex \(i\) is equivalent to
\[
\expected{k_i(X)} = \sum_{j \in V}{\expected{X_{ij}}} = m \frac{\sum_{i \in V} \hat{k}_i \hat{k}_j}{\sum_{i\leq j\in V}\hat{k}_i \hat{k}_j} \neq \hat{k}_i.
\]

This approach is wrong because the total number of undirected stub combinations is larger than in the directed case. The reason for this is the symmetry in the process of wiring two stubs.
Let \(\hat \kseq_i\) be the degree of vertex \(i\) and \(\hat \kseq_j\) the degree of vertex \(j\).
To form a multi-edge \((i,j)\), each one of the \(\hat \kseq_i\) stubs of \(i\) can be connected to all \(\hat \kseq_j\) stubs of \(j\), and vice versa, each of the \(\hat \kseq_j\) stubs of \(j\) can be connected to all \(\hat \kseq_i\) stubs of \(i\).
Hence, the total number of combinations of stubs between vertices \(i\) and \(j\) equals to \(\hat \kseq_i \hat \kseq_j + \hat \kseq_j \hat \kseq_i = 2 \hat \kseq_i \hat \kseq_j\).
As an equivalent to \cref{lem:xidir}, we formalise this in the following lemma.

\begin{lemma}[Undirected stubs combination count]
	\label{lem:xiundir}
	The total number of stub combinations between two vertices \(i,j\) in an undirected graph is given by:
	\begin{equation}\label{eq:combinations-undir}
		\begin{cases}
			2 \Xi_{ij} & \mathrm{if~} i\neq j, \\
			\Xi_{ii} & \mathrm{if~} i=j, \\
		\end{cases}
	\end{equation}
\end{lemma}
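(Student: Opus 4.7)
The plan is to handle the two cases in the piecewise formula separately, and in each case to argue by elementary counting of ordered stub pairings, building on the argument of \cref{lem:xidir} where the key identity $k_i^{\mathrm{out}}=k_i^{\mathrm{in}}=k_i$ for the undirected setting will be used. The split arises because a directional swap of the two endpoints of a pair behaves differently when the endpoints are at distinct vertices than when they coincide.

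For $i\neq j$, I would model each undirected stub combination between $i$ and $j$ as an ordered pair of stubs, and apply the reasoning of \cref{lem:xidir} twice. First, each of the $k_i$ stubs of $i$ can be paired with each of the $k_j$ stubs of $j$, producing $k_i k_j$ combinations. Second, by the symmetric role of the two vertices in the undirected edge-formation process, each of the $k_j$ stubs of $j$ can likewise be paired with each of the $k_i$ stubs of $i$, yielding another $k_i k_j$ combinations. These two sets of combinations are disjoint because they correspond to opposite orderings of the endpoints. Summing gives $k_i k_j + k_j k_i = 2k_i k_j$, which equals $2\Xi_{ij}$ by \cref{def:xi} and the identification $k_i = k_i^{\mathrm{out}} = k_i^{\mathrm{in}}$.

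For $i=j$, the same argument collapses: both stubs to be paired originate from $i$, so there is no second ``reversed'' pairing of a $j$-stub with an $i$-stub disjoint from the first set. The ordered pairs $(s,t)$ where $s$ and $t$ are stubs of $i$ are counted exactly once, giving $k_i \cdot k_i = \Xi_{ii}$ combinations.

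The main conceptual obstacle is not the calculation itself but justifying why it is legitimate to count ordered stub pairs in the off-diagonal case. This is forced on us by the derivation given immediately before the lemma, where using $\Xi_{ij}$ unchanged in the undirected urn failed to reproduce the prescribed expected degrees; the factor of two for $i\neq j$ is precisely the correction that restores $\mathbb E[k_i(X)] = \hat k_i$. I would mention this briefly to motivate the counting convention, and then the two displayed cases follow directly from the above.
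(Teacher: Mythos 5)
Your argument is correct and is essentially the paper's own: the paper justifies the lemma in the paragraph immediately preceding it by the same ordered-pairing count (each of the $k_i$ stubs of $i$ with each of the $k_j$ stubs of $j$, and vice versa, giving $k_ik_j+k_jk_i=2\Xi_{ij}$), with the off-diagonal doubling motivated exactly as you do by the failed expected-degree computation. Your explicit treatment of the diagonal case (a single set of ordered pairs of $i$-stubs, hence $\Xi_{ii}$ with no doubling) is left implicit in the paper, but it matches the stated convention and the later consistency check via the directed-to-undirected projection.
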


We can now formulate the equivalent of \cref{thm:softconf} for the undirected case.
The distribution underlying the undirected soft configuration model can then be computed analogously to \cref{thm:softconf} with the help of \cref{lem:xiundir}.

\begin{thm}\label{thm:undirected}
	Let \(\G(V,E)\) be an undirected graph with \(n=\abs{V}\) vertices and \(m=\abs{E}\) multi-edges.
	Let \(\kseq \in \N^n\) be the vector representing its degree sequence.
	Let \(X\) be the undirected soft configuration model induced by \(\G\) defined as in \cref{def:softconf-undir}.
	If the probability distribution underlying \(X\) depends only on the degree sequence of \(\G\) and the total number of multi-edges \(m\), and all multi-edges are equiprobable, then \(X\) follows the multivariate hypergeometric distribution given in \cref{eq:hyper-undir}.
	
	Let \(\mathbf{A} \in \N_0^n \times \N_0^n\) be the symmetric adjacency matrix corresponding to an undirected graph, and \(\Xi_{ij} = \kseq_i \kseq_j\) be the combinatorial matrix induced by \(\G\).
	Then the undirected soft configuration model \(X\) is distributed as follows:
	\begin{equation}\label{eq:hyper-undir}
		\Pr(X=\G) = \frac{\prod_{i<j\in V}\dbinom{2\Xi_{ij}}{A_{ij}} \prod_{l\in V}\dbinom{\Xi_{ll}}{A_{ll}/2}}{\dbinom{M}{m}},
	\end{equation}
	where \(M=\sum_{i<j\in V} 2\Xi_{ij} + \sum_{l\in V} \Xi_{ll} = \sum_{i,j\in V} \Xi_{ij}\) is the total number of undirected stub combinations between all pair of vertices. 
\end{thm}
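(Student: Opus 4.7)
The plan is to follow the same urn-problem strategy used in the proof of \cref{thm:softconf}, substituting the directed stub counts with the undirected ones from \cref{lem:xiundir}. First I would argue, as in \cref{thm:softconf}, that the hypotheses (dependence only on the degree sequence and on \(m\), with all multi-edges equiprobable) force the model to be realised as drawing \(m\) balls without replacement from an urn whose balls represent the distinct stub pairings. The colour of each ball labels the unordered vertex pair to which the pairing belongs, so that the multiplicity \(A_{ij}\) (or in the diagonal case the number of self-loops \(A_{ii}/2\)) counts the balls of that colour drawn.

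Next I would tally the colours using \cref{lem:xiundir}: for each unordered pair \(i<j\) the urn contains \(2\Xi_{ij}\) balls, while for each vertex \(i\) the urn contains \(\Xi_{ii}\) balls representing the self-loop stub pairings. Summing gives the total
\begin{equation*}
M=\sum_{i<j\in V}2\Xi_{ij}+\sum_{l\in V}\Xi_{ll}=\sum_{i,j\in V}\Xi_{ij},
\end{equation*}
as claimed. A short check confirms that the sample size is indeed \(m\): from \cref{def:softconf-undir} we have \(2m=\sum_{i}\hat\kseq_i\), and the number of balls drawn is \(\sum_{i<j}A_{ij}+\sum_l A_{ll}/2\), which matches \(m\) once we recall the convention that the diagonal of \(\mathbf A\) stores twice the self-loop multiplicity.

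With the urn fixed and the sample size matched, \(X\) is by construction a multivariate hypergeometric draw, and the probability of any particular outcome is the product of the per-colour combinatorial factors divided by \(\binom{M}{m}\). The per-colour factors are \(\binom{2\Xi_{ij}}{A_{ij}}\) for \(i<j\) and \(\binom{\Xi_{ll}}{A_{ll}/2}\) for the diagonal, which reproduces \cref{eq:hyper-undir}.

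The main subtlety — and really the only step that is not a verbatim translation of the proof of \cref{thm:softconf} — is the bookkeeping of self-loops, where the factor of two in the diagonal convention of \(\mathbf A\) has to be reconciled with the fact that a self-loop consumes only \emph{one} stub pairing (not two) from the urn. Once this accounting is made explicit, both the exponent \(A_{ll}/2\) in the binomial and the identity \(M=\sum_{i,j}\Xi_{ij}\) drop out cleanly, and the rest of the argument is the same urn computation as in the directed case.
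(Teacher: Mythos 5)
Your proposal is correct and follows essentially the same route as the paper, whose proof simply invokes the directed argument of \cref{thm:softconf} with the undirected stub counts from \cref{lem:xiundir}; you merely make explicit the self-loop bookkeeping (the \(A_{ll}/2\) convention and the sample-size check) that the paper leaves implicit.
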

\begin{proof}
	The proof follows the same reasoning of the proof of \cref{thm:softconf}, accounting for the fact that the total number of stubs combinations is now given by \cref{eq:combinations-undir}.
\end{proof}

\begin{cor}\label{cor:margundir}
	For each pair of vertices \(i,j \in V\), the probability that \(X\) has exactly \({A}_{ij}\) edges between \(i\) and \(j\) is given by the marginal distributions of the multivariate hypergeometric distribution in \cref{eq:hyper-undir}, i.e.,
	\begin{equation}
		\label{eq:hypergeometricEdgeUndir}
		\Pr(X_{ij}={A}_{ij}) = 
		\begin{cases}
			\dbinom{2\Xi_{ij}}{{A}_{ij}}\dbinom{M-2\Xi_{ij}}{m-{A}_{ij}}{\dbinom{M}{m}}^{-1} & \mathrm{for~} i \neq j,\\[1.5em]
			\dbinom{\Xi_{ij}}{{A}_{ij}/2}\dbinom{M-\Xi_{ij}}{m-{A}_{ij}/2}{\dbinom{M}{m}}^{-1} & \mathrm{for~} i = j.
		\end{cases}
	\end{equation}
	\end{cor}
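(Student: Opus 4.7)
The plan is to derive both lines of \cref{eq:hypergeometricEdgeUndir} directly from the urn interpretation established in the proof of \cref{thm:undirected}, using the well-known fact that any single-component marginal of a multivariate hypergeometric distribution is itself univariate hypergeometric.

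First, I would recall the urn setup. There are $n(n+1)/2$ colours, one for each unordered pair $\{i,j\}$ with $i\leq j$. By \cref{lem:xiundir}, the number of balls of colour $(i,j)$ is $2\Xi_{ij}$ when $i<j$ and $\Xi_{ii}$ when $i=j$, so the urn contains $M=\sum_{i<j}2\Xi_{ij}+\sum_l\Xi_{ll}=\sum_{i,j\in V}\Xi_{ij}$ balls in total, from which $m$ are drawn without replacement. The random variable $X_{ij}$ (for $i<j$) counts balls of colour $(i,j)$ in the sample, while by the self-loop convention $A_{ii}$ is even and equals twice the number of sampled balls of colour $(i,i)$.

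Second, for the off-diagonal case $i\neq j$, I would partition the urn into ``successes'' (the $2\Xi_{ij}$ balls of colour $(i,j)$) and ``failures'' (the remaining $M-2\Xi_{ij}$ balls of every other colour). Summing the joint probability \cref{eq:hyper-undir} over all configurations of the other coordinates is equivalent to this dichotomous grouping, and yields a univariate hypergeometric with parameters $(M,\,2\Xi_{ij},\,m)$. This gives the first line of \cref{eq:hypergeometricEdgeUndir} directly.

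Third, for the diagonal case $i=j$, the same dichotomy applies, but with the self-loop bookkeeping: the number of balls of colour $(i,i)$ actually drawn is $A_{ii}/2$, out of $\Xi_{ii}$ such balls available. Hence $X_{ii}/2\sim\mathrm{Hypergeometric}(M,\Xi_{ii},m)$, and rewriting this in terms of $A_{ii}$ yields the second line. The only real obstacle is being careful with this self-loop convention; once the substitution $A_{ii}/2$ for the number of drawn same-coloured balls is made, the formula follows from the standard marginalisation argument without further computation.
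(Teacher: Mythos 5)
Your proposal is correct and matches the paper's (implicit) argument: the corollary is exactly the single-colour marginal of the multivariate hypergeometric urn of \cref{thm:undirected}, with the $2\Xi_{ij}$ versus $\Xi_{ii}$ ball counts from \cref{lem:xiundir} and the self-loop bookkeeping $A_{ii}/2$ handled as you describe. The paper simply asserts this as a property of the hypergeometric distribution; you have spelled out the same dichotomous-grouping argument in full, with no gaps.
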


\begin{cor}\label{cor:expsoftundir}
	The \emph{expected} degree sequence of realisations of \(X\) correspond to the respective degree sequences of the graph \(\G\) inducing \(X\).
\end{cor}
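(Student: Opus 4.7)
The plan is to mirror the argument used for \cref{cor:expsoftdir}, but tracking carefully the factor-of-two asymmetry between off-diagonal and diagonal entries that arose from \cref{lem:xiundir} and that already appears in the two cases of \cref{cor:margundir}.

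First I would compute the expected value of each entry $X_{ij}$ using the marginals given in \cref{eq:hypergeometricEdgeUndir}. For $i\neq j$ the marginal is a standard univariate hypergeometric with $2\Xi_{ij}$ ``success'' balls out of $M$ and $m$ draws, so
\[
\expected{X_{ij}} = m\,\frac{2\Xi_{ij}}{M}.
\]
For $i=j$ the marginal is hypergeometric in the variable $A_{ii}/2$ (which is the actual multiplicity of the self-loop at $i$) with $\Xi_{ii}$ ``success'' balls, giving $\expected{X_{ii}/2} = m\Xi_{ii}/M$, i.e.\ $\expected{X_{ii}} = 2m\Xi_{ii}/M$. Conveniently both cases can be written uniformly, after substituting $\Xi_{ij}=\hat\kseq_i\hat\kseq_j$, as $\expected{X_{ij}}=2m\hat\kseq_i\hat\kseq_j/M$.

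Next I would sum this over $j$ to obtain the expected degree. Since the adjacency convention in the undirected case puts $2\times(\text{number of self-loops})$ on the diagonal, the degree is $k_i(X)=\sum_{j\in V}X_{ij}$ without any extra correction, so
\[
\expected{k_i(X)} = \sum_{j\in V}\expected{X_{ij}}
= \frac{2m}{M}\,\hat\kseq_i \sum_{j\in V}\hat\kseq_j.
\]
Finally I would use the identity $M=\sum_{i,j\in V}\Xi_{ij}=\bigl(\sum_{i\in V}\hat\kseq_i\bigr)^{2}=(2m)^{2}$ from the statement of \cref{thm:undirected} together with the handshake relation $\sum_{j}\hat\kseq_j=2m$; substitution immediately gives $\expected{k_i(X)}=\hat\kseq_i$.

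The only subtle point — and the step where it would be easy to slip — is bookkeeping the factor of two on the diagonal. One must remember simultaneously that (i) \cref{lem:xiundir} gives $\Xi_{ii}$ (not $2\Xi_{ii}$) stub combinations for a self-loop, (ii) the marginal in \cref{cor:margundir} treats $A_{ii}/2$ as the hypergeometric variable, and (iii) the convention $A_{ii}=2\times(\text{self-loop multiplicity})$ already bakes the factor of two into the degree sum $\sum_j A_{ij}$. These three conventions conspire so that the uniform formula $\expected{X_{ij}}=2m\hat\kseq_i\hat\kseq_j/M$ holds for all $i,j$, making the final sum collapse cleanly. Once this is verified, the corollary is immediate.
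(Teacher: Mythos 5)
Your proposal is correct and follows essentially the same route as the paper's proof: establish \(\expected{X_{ij}} = 2m\,\Xi_{ij}/M\) for every pair (the paper asserts this directly from the distribution in \cref{eq:hyper-undir}, whereas you derive it from the marginals in \cref{cor:margundir}, including the \(A_{ii}/2\) diagonal case), then sum over a row and use \(\sum_j \hat\kseq_j = 2m\) and \(M = (2m)^2\). Your explicit bookkeeping of the self-loop conventions is a welcome addition but does not change the argument.
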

\begin{proof}
	For each pair of vertices \(i,j\in V\), the expected number of multi-edges \(\expected{X_{ij}}\) according to the hypergeometric distribution in \cref{eq:hyper-undir} is expressed as
	\begin{equation}
		\expected{X_{ij}} = 2 m \frac{\Xi_{ij}}{M}
	\end{equation}
	With this, we can write the expected degrees as
	\begin{align}
		\expected{k_j(X)} 
		&= \sum_{i \in V} {\expected{X_{ij}}} = 2 m \frac{\sum_{i \in V}\Xi_{ij}}{M} = 2 m  \frac{\sum_{i \in V} \hat k_i \hat k_j}{\sum_{i,j \in V} \hat k_i \hat k_j} = \hat k_j.
	\end{align}
\end{proof}

\subsubsection{Correspondence between directed and undirected models}

In the previous two sections, we have formulated the soft configuration model for directed and undirected graphs independently of each other.
As the reader recalls, we have motivated these models by the need of an analytically tractable analogy for the rewiring algorithm of the Molloy-Reed model~\cite{Molloy1995}.
This algorithm is the same in the directed and undirected case: select the first stub (outgoing, in the directed case), then select the second stub (incoming, in the directed case), create an edge by wiring these two stubs, and repeat the process until all the stubs are wired.
Hence, we also show the correspondence between our directed and undirected formulations in this section.

We prove that the probability distribution of undirected graphs in the undirected soft configuration model given by \cref{eq:hyper-undir} is a degenerate case of the directed model given by \cref{eq:hyper}.

With the following definition we provide a projection from \(\N^{n^2}\) to \(\N^{n(n+1)/2}\) that serves the purpose mapping a directed graph to its undirected equivalent, i.e., stripping the direction from its edges.

\begin{defn}[Undirected projection]\label{def:undirectionalise}
	Let \(\dir{\G}(V,\dir{E})\) be a directed graph with adjacency matrix \(\dir{\mathbf{A}}\).
	We define as \emph{undirected projection} the map \(\pi:\N^{n^2}\to\N^{n(n+1)/2}\) that maps \(\dir\G\) to the undirected graph \(\G(V,E)\) with adjacency matrix \(\mathbf{A}=\mathbf{\dir{A}} + \mathbf{\dir{A}}^T\).
	We indicate with \(\dir\G\hookrightarrow\G\) the fact that \(\G=\pi(\dir\G)\).
\end{defn}

According to \cref{def:undirectionalise} there are different directed graphs that can be projected to the same undirected graph.
At the same time, every undirected graph has at least one corresponding directed graph that can be projected to it, and for every directed graph there is at least one undirected graph to which it can be projected.
These make the projection in \cref{def:undirectionalise} surjective and not injective.

Similarly, we can define an undirected random graph model as the projection of a directed random graph model.

\begin{defn}\label{def:undirproj} Let \(\dir X\) be a directed random graph model.
	With an abuse of notation, we use \({\dir X}^T\) to refer to the transposition of the matrix representation of \(\dir X\).
	We say that \(X:=\dir X + {\dir X}^T\) is the undirected projection of \(\dir X\) if \(\forall \dir\G\) in the sample space of \(\dir X\) exists a \(\G\) in the sample space of \(X\) such that the undirected projection \(\pi(\dir\G)\) of \(\dir\G\) is \(\G\).
	Furthermore, for every undirected graph \(\G\) in the sample space of \(X\), \(\exists \dir{\G}\) such that \(\pi(\dir\G) = \G\).
	We indicate with \(\dir X\hookrightarrow X\) the fact that \(X\) is the undirected projection of \(\dir X\).
\end{defn}

Note that according to \cref{def:undirectionalise}, the number of multi-edges \(m\) of \(\G\) equals the number of multi-edges of any directed \(\dir{\G}\) that projects to \(\G\).

Finally we need to relate the distribution underlying a directed random graph model to the the distribution of its undirected projection.
The following lemma serves this purpose.

\begin{lemma}\label{lem:projdist}
	Let \(\G\) be an undirected graph and \(X\) an undirected random graph model.
	Let \(\dir X\) be a directed random graph model such that \(\dir X\hookrightarrow X\).
	The probability distribution of \(X\), \(\Pr(X=\G)\), is given as:
	\begin{equation}\label{eq:undirmap}
		\Pr(X=\G)=\sum_{\dir\G\in \pi^{-1}(\G)}\Pr\left(\dir{X}=\dir{\G}\right),
	\end{equation}
	where the set \( \pi^{-1}(G)=\left\{ \dir\G\;\vert\dir\G\hookrightarrow\G \right\}\) is the set of all directed graphs \(\dir\G\) that map to \(\G\).
\end{lemma}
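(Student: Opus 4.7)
The statement is essentially the pushforward/law-of-total-probability formula for the projection $\pi$, so the plan is to unwind the definitions and apply disjoint additivity.

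First, I would recast the event $\{X=\G\}$ in terms of $\dir X$. By \cref{def:undirproj}, $X = \dir X + {\dir X}^T$, which is exactly the adjacency-matrix form of the projection $\pi$ from \cref{def:undirectionalise}. Hence as random graphs $X = \pi(\dir X)$, and therefore
\begin{equation*}
\{X=\G\}=\{\pi(\dir X)=\G\}=\{\dir X\in\pi^{-1}(\G)\}.
\end{equation*}
The set $\pi^{-1}(\G)=\{\dir\G\mid\dir\G\hookrightarrow\G\}$ is non-empty by the surjectivity clause in \cref{def:undirproj}, and it is finite because every $\dir\G\in\pi^{-1}(\G)$ must have the same number of multi-edges $m$ as $\G$ (as noted right after \cref{def:undirproj}), so it lives in a finite sample space.

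Second, I would write $\pi^{-1}(\G)$ as the disjoint union $\bigsqcup_{\dir\G\in\pi^{-1}(\G)}\{\dir\G\}$ of its singletons; the corresponding elementary events $\{\dir X=\dir\G\}$ are pairwise disjoint (two distinct directed graphs are distinct outcomes of the directed model). Finite additivity of the probability measure on the directed sample space then gives
\begin{equation*}
\Pr(X=\G)=\Pr\bigl(\dir X\in\pi^{-1}(\G)\bigr)=\sum_{\dir\G\in\pi^{-1}(\G)}\Pr(\dir X=\dir\G),
\end{equation*}
which is precisely \cref{eq:undirmap}.

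There is no genuine obstacle here: the result is a tautological restatement of the pushforward measure induced by $\pi$. The only things to be careful about are (i) making sure $\pi^{-1}(\G)$ is indeed the correct preimage in the \emph{directed} sample space (guaranteed by the surjectivity built into \cref{def:undirproj}, so no directed graph with a different edge count can project to $\G$), and (ii) observing that the sum is finite, so no measure-theoretic subtleties about countable additivity arise.
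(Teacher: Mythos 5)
Your proof is correct and takes essentially the same approach as the paper: both arguments reduce to the identity \(X=\pi(\dir X)\) from \cref{def:undirproj} and then apply the law of total probability over the fiber \(\pi^{-1}(\G)\). The only cosmetic difference is that the paper writes the fiber sum out explicitly as nested sums over the decompositions \(A_{ij}=\dir A_{ij}+(A_{ij}-\dir A_{ij})\) before identifying it with \(\sum_{\dir\G\in\pi^{-1}(\G)}\Pr(\dir X=\dir\G)\), whereas you obtain the same decomposition abstractly via disjoint additivity of the events \(\{\dir X=\dir\G\}\).
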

\begin{proof}
	Let \(\dir X\) be a \(n^2-\)dimensional random vector formalising a directed random graph model, such that \(\dir X\hookrightarrow X\).
	For simplicity we index the elements of both random vectors as in the equivalent adjacency matrix notation.
	Let \(X_{ij}=X_{ji}\) the ij-th element of \(X\) and \(\dir X_{ij}, \dir X_{ji}\) the corresponding elements of \(\dir X\).
	
	According to \cref{def:undirproj}, \(X\) is the \(n(n+1)/2\)--dimensional random vector defined as \(\dir X + {\dir X}^T\), where its each element \(ij\) is defined as \(X_{ij}=\dir X_{ij} + {\dir X}^T_{ji}\). The probability distribution of \(X\), \(\Pr(X=\G)=f_X(\G)\) can be specified in terms of the probability distribution \(f_{\dir{X}}(\dir\G)=\Pr(\dir X=\dir\G)\):	
	\begin{align}
		f_X(z) = f_X\left(\{z_{ij}\}_{ij}\right) = \sum\cdots\sum_{a_{ij}=0}^{z_{ij}} f_{\dir{X}}\left(\{z_{ij}-a_{ij},a_{ij}\}_{ij,ji}\right)\label{eq:prooflemma}
	\end{align}
	The summation in \cref{eq:prooflemma} corresponds to the sum over the probabilities of all possible combinations of tuples \(\dir X_{ij},\dir X_{ji}\) which sum to \(A_{ij}\) for all indices \(ij\).
	Hence, following \cref{def:undirectionalise}, this is equivalent to sum over all possible \(\dir\G\hookrightarrow\G\).
	This proves the equivalence between \cref{eq:prooflemma} and \cref{eq:undirmap} and thus, the lemma.
\end{proof}

We can proceed showing that the undirected version of the soft-configuration model given in \cref{thm:softconf} is indeed equivalent to the model defined in \cref{thm:undirected}.
\Cref{thm:equivalence} stems from the fact that sampling an undirected edge between two vertices is equivalent to sampling a directed edge between the same pair of vertices in any of the two directions, and then stripping its direction information.
The distribution underlying the undirected soft configuration model can then be computed with the help of \cref{lem:projdist}.

\begin{thm}\label{thm:equivalence}
	Let \(\G\) be an undirected graph and \(X\) the undirected soft-configuration model.
	Let \(\dir X\) be the directed soft configuration model with combinatorial matrix with elements \(\Xi_{ij} = \kseq_i \kseq_j\).
	The probability distribution of \(X\) is then given by \cref{eq:hyper-undir}.
\end{thm}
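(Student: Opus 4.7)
The plan is to apply \cref{lem:projdist} to rewrite the undirected probability as a sum of directed probabilities, substitute the closed form from \cref{thm:softconf} into each summand, and collapse the resulting combinatorial sum by Vandermonde's identity.

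First I would combine \cref{lem:projdist} with \cref{thm:softconf} to obtain
\[
\Pr(X=\G) \;=\; \sum_{\dir\G \,\in\, \pi^{-1}(\G)} \Pr(\dir X=\dir\G) \;=\; \frac{1}{\binom{M}{m}} \sum_{\dir\G \,\in\, \pi^{-1}(\G)} \prod_{i,j\in V} \binom{\Xi_{ij}}{\dir A_{ij}}.
\]
By \cref{def:undirectionalise}, $\dir\G\in\pi^{-1}(\G)$ exactly when $\dir A_{ij}+\dir A_{ji}=A_{ij}$ for every $i\neq j$ and $\dir A_{ii}=A_{ii}/2$ on the diagonal (which forces $A_{ii}$ to be even, as expected for an undirected adjacency matrix). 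Crucially, these constraints decouple across distinct unordered pairs $\{i,j\}$, so the sum factorizes into a product of independent contributions, one per pair $i<j$ and one per diagonal index $l$.

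The computational heart of the proof is then a single application of Vandermonde's identity to each off-diagonal factor. Using the symmetry $\Xi_{ij}=k_ik_j=\Xi_{ji}$,
\[
\sum_{\dir A_{ij}=0}^{A_{ij}} \binom{\Xi_{ij}}{\dir A_{ij}}\binom{\Xi_{ji}}{A_{ij}-\dir A_{ij}} \;=\; \binom{2\Xi_{ij}}{A_{ij}},
\]
while each self-loop factor collapses to the single term $\binom{\Xi_{ll}}{A_{ll}/2}$ because $\dir A_{ll}$ is uniquely determined. Assembling these contributions reproduces \cref{eq:hyper-undir} verbatim.

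The only bookkeeping step that merits care is the matching of normalizations: both the directed and undirected models are built on an urn with $M=\sum_{i,j\in V}\Xi_{ij}$ balls, and the observation recorded immediately after \cref{def:undirproj} guarantees that every $\dir\G\in\pi^{-1}(\G)$ has the same number $m$ of multi-edges as $\G$ itself, so the denominator $\binom{M}{m}$ is common to both sides. Once this identification is made, the Vandermonde collapse is the only non-mechanical step; conceptually, it is precisely the step that encodes the two-fold orientation ambiguity of each off-diagonal undirected edge, doubling $\Xi_{ij}$ to $2\Xi_{ij}$, while the absence of this ambiguity for self-loops leaves the diagonal factors $\binom{\Xi_{ll}}{A_{ll}/2}$ untouched.
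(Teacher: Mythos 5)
Your proposal is correct and matches the paper's own argument essentially step for step: apply \cref{lem:projdist}, substitute the hypergeometric form of \cref{thm:softconf}, factor the preimage sum over unordered pairs (with \(\dir{A}_{ll}=A_{ll}/2\) on the diagonal), and collapse each off-diagonal sum via Vandermonde's identity to obtain \cref{eq:hyper-undir}. No gaps or differences of approach worth noting.
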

\begin{proof}
	The distribution of \(\dir X\) is given by the hypergeometric distribution in \cref{eq:hyper}.
	The model \(\dir X\) satisfies the conditions in \cref{lem:projdist} for the undirected soft configuration model \(X\), because \(\dir X\) maps to the undirected soft configuration model \(X\) in accordance with \cref{def:undirproj}.
	Hence, we write the probability distribution underlying \(X\) as the sum of the probabilities of all corresponding directed graphs \(\dir{\G}\) under the directed soft configuration model \(\dir X\).
	\begin{align}
		\Pr(X=\G)&=\sum_{\dir\G\in \pi^{-1}(\G)}\Pr\left(\dir{X}=\dir{\G}\right)\\
		&= \sum_{\dir\G\in \pi^{-1}(\G)}\dbinom{M}{m}^{-1} \prod_{i,j\in V}\dbinom{\Xi_{ij}}{\dir{A}_{ij}}
		\label{eq:sum-dir:2} \\
		&= \sum_{\dir\G\in \pi^{-1}(\G)}\dbinom{M}{m}^{-1} \prod_{l\in V}\dbinom{\Xi_{ll}}{\dir{A}_{ll}} \prod_{i<j\in V}\dbinom{\Xi_{ij}}{\dir{A}_{ij}} \dbinom{\Xi_{ij}}{{A}_{ij} - \dir{A}_{ij}}
		\label{eq:sum-dir:3} \\
		&= \sum \dots \sum_{\dir{A}_{ij}=0}^{A_{ij}} \dbinom{M}{m}^{-1} \prod_{l\in V}\dbinom{\Xi_{ll}}{\dir{A}_{ll}} \prod_{i<j\in V}\dbinom{\Xi_{ij}}{\dir{A}_{ij}} \dbinom{\Xi_{ij}}{{A}_{ij} - \dir{A}_{ij}}
		\label{eq:sum-dir:4}
		\end{align}
	In \cref{eq:sum-dir:4} we have \(n(n-1)/2\) summations for all \(\dir{A}_{ij}\), \(i < j\), which are the decomposition of the summation in \cref{eq:sum-dir:3}.
	Then, we can swap the summations and multiplications in \cref{eq:sum-dir:4}, which leads to 
	\begin{equation}
		\Pr(\mathbf{A}) = 
		\dbinom{M}{m}^{-1} \prod_{l\in V}\dbinom{\Xi_{ll}}{\dir{A}_{ll}} \prod_{i<j\in V}\sum_{\dir{A}_{ij} = 0}^{A_{ij}} \dbinom{\Xi_{ij}}{\dir{A}_{ij}} \dbinom{\Xi_{ij}}{{A}_{ij} - \dir{A}_{ij}}.
		\label{eq:sum-dir:fold}
	\end{equation}
	From Vandermonde's identity, which states
	\begin{equation}
		\sum_{a=0}^{A_{ij}} \dbinom{\Xi_{ij}}{a}\dbinom{2\Xi_{ij}-\Xi_{ij}}{A_{ij}-a} = \dbinom{2\Xi_{ij}}{A_{ij}},
		\label{qe:Vandermonde}
	\end{equation}
	and from the fact that \(\dir{A}_{ii} = A_{ii}/2\), \(\forall i \in V\), it follows that \cref{eq:sum-dir:fold} is equivalent to \cref{eq:hyper-undir}.
\end{proof}

In the two sections above we have provided a parsimonious formulation of the soft-configuration model in terms of an \emph{hypergeometric ensemble}.
The ensemble provides a random graph model formulation for directed and undirected graphs alike, in which (i) the expected in- and out-degree sequences are fixed, and (ii) multi-edges between these vertices with fixed expected degrees are formed uniformly at random.
More precisely, the probability for a particular pair of vertices to be connected by an edge is only influenced by combinatorial effects, and thus only depends on the degrees of the vertices and the total number of edges.

\subsection{Generalised Hypergeometric Ensemble of Graphs}
As described above, the hypergeometric ensemble of random graphs samples edges uniformly at random from the urn containing all possible edges.
However, such uniform sampling of edges is generally not enough to describe real graphs.
In fact, in empirical graphs edge probabilities do not only depend on the activity of vertices (represented by their degrees) but also on other characteristics. Examples of such characteristics are vertex labels that lead to observable group structures, distances between vertices, etc.
Below we introduce how such influences on the sampling probabilities of edges can be encoded in a random graph model by means of a dyadic property we call \emph{edge propensity}.

First, we introduce the concept of edge propensity.
Given two dyads \((i,j)\) and \((k,l)\in V\times V\) where \(\Xi_{ij}=\Xi_{kl}\), according to the soft-configuration model of \cref{sec:conf} the probabilities of sampling one multi-edge between \((i,j)\) and \((k,l)\) are equal, leading to odds-ratio of \(1\) between the two pairs of vertices.
Instead, we generalise the model in a way that fixing arbitrary odds-ratios is possible.
That is, we define the aforementioned edge propensities such that the ratio between them is the odds-ratio of sampling one multi-edge between the two corresponding vertex pairs, all else being equal.
We use edge propensities to bias the sampling probability of each multi-edge, as illustrated in \cref{fig:genconf}.
This way, the probability of the number of multi-edges between a pair of vertices depends on both the degrees of the vertices and their edge propensity.
\begin{figure}
	\centering
	\includegraphics[width=.85\textwidth]{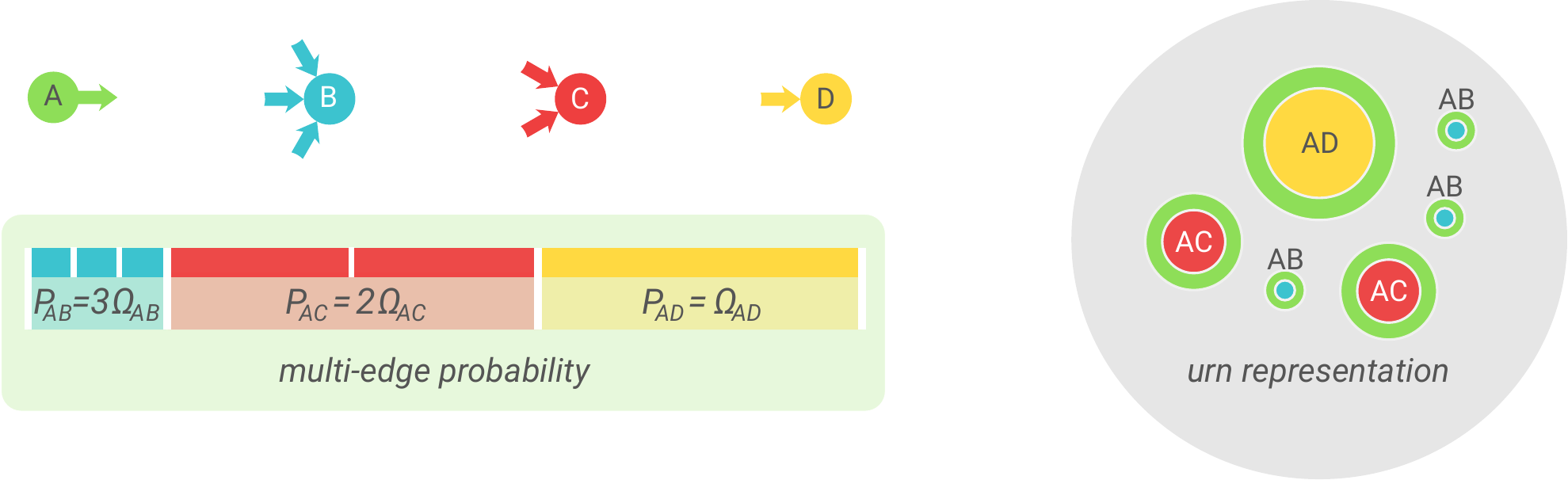}
	\caption{The effect of \emph{edge propensities} on the configuration model. Differently from the standard configuration model, here the stubs are not sampled uniformly at random (cf.~\cref{fig:softconf}).
	Given an out-stub, each in-stub is characterised by a propensity \(\Omega_{ij}\) of being chosen. As a result, the probability of wiring the out-stub \((A,\cdot)\) to the vertex \(D\) is larger than that of \(B\) due to a very large edge propensity \(\Omega_{AD}\), even though vertex \(B\) has three times more in-stubs than vertex \(D\).
	}\label{fig:genconf}
\end{figure}

We encode edge propensities in a matrix \(\mathbf{\Omega}\) defined as follows.
\begin{defn}[Propensity matrix]
	Let \(\mathbf{\Omega}=(\Omega_{ij})_{i,j\in V}\in\R^{\abs{V}\times\abs{V}}\) be a \(n\times n\) matrix where \(n\) is the number of vertices.
	Let \((i,j)\) and \((k,l)\in V\times V\).
	Let \(\omega_{ij,kl}\) the odds-ratio of sampling one multi-edge between \((i,j)\) instead of \((k,l)\).
	The entries \(\Omega_{ij}\) and \(\Omega_{kl}\) of the propensity matrix \(\mathbf{\Omega}\) are then defined such that \(\Omega_{ij}/\Omega_{kl}=\omega_{ij,kl}\).
	This implies that the propensity matrix \(\mathbf{\Omega}\) is defined up to a constant, as multiplying \(\mathbf{\Omega}\) with any constant preserves the specified odds-ratios.
\end{defn}

Now, we define a random graph model that combines the degree-related combinatorial effects, i.e., the configuration model, and the newly introduced edge propensities.
We do so by using the propensities to \emph{bias} the sampling process described in \cref{sec:conf}.
In the urn model analogy, such biased sampling implies that the probability of drawing a certain number of balls of a given colour (i.e., multi-edges between the corresponding pair of vertices) depends both  on their number and their size, as illustrated in \cref{fig:genconf}.
The probability distribution resulting from such a biased sampling process is given by the multivariate \emph{Wallenius' non-central hypergeometric distribution}~\cite{Wallenius1963,Fog2008a}.

\begin{thm}\label{thm:ghype}
	Let \(\mathcal G(V,E)\) be a directed graph with \(n=\abs{V}\) vertices and \(m=\abs{E}\) edges.
	Under the assumptions introduced above, the generalised hypergeometric ensemble of graphs (GHypEG) \(X\) induced by \(\mathcal G\) and a given propensity matrix \(\mathbf\Omega\) follows the \emph{multivariate Wallenius' non-central hypergeometric distribution} given in \cref{eq:walleniusNet}.
	
	Let \(\mathbf A\in \N^n\times\N^n\) be the adjacency matrix associated with \(\G\) and \(\mathbf\Xi\in \N^n\times\N^n\) be its \emph{combinatorial matrix} defined in \cref{eq:xi}.
	Then the GHypEG defined by \(\mathbf\Xi\) and \(\mathbf\Omega\), \(\G\) is distributed as follows:
	\begin{equation}
		\label{eq:walleniusNet}
		\Pr(X=\mathbf{A})=\left[\prod_{i,j\in V}{\dbinom{\Xi_{ij}}{A_{ij}}}\right]
		\int_{0}^{1}{\prod_{i,j\in V}{\left(1-z^{\frac{\Omega_{ij}}{S_{\mathbf{\Omega}} }}\right)^{A_{ij}}}dz}
	\end{equation}
	with
	\begin{equation}
		S_{\mathbf{\Omega}}= \sum_{i,j\in V} \Omega_{ij}(\Xi_{ij}-A_{ij}).
	\end{equation}
	
\end{thm}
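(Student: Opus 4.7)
The plan is to extend the urn-problem interpretation that underlies \cref{thm:softconf} by assigning a sampling weight to each ball, and then to identify the resulting biased sampling scheme with the multivariate Wallenius' non-central hypergeometric urn problem, whose closed-form probability mass function is the known integral expression appearing in \cref{eq:walleniusNet}.

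First, I would set up the urn exactly as in the proof of \cref{thm:softconf}: a single urn with $M=\sum_{i,j\in V}\Xi_{ij}$ balls, partitioned into $n^{2}$ colours with $\Xi_{ij}$ balls of colour $(i,j)$, one ball per admissible stub pairing between vertices $i$ and $j$. I would then attach to every ball of colour $(i,j)$ a multiplicative weight $\Omega_{ij}$. The sampling scheme is: at each of the $m$ successive draws, the conditional probability of picking a particular remaining ball is proportional to its weight, i.e.\ equal to $\Omega_{ij}$ divided by the sum of weights of all balls still in the urn. This exactly matches the formalisation of edge propensities as odds-ratios between pairs of vertices, since at any stage the ratio of probabilities of drawing a ball of colour $(i,j)$ versus one of colour $(k,l)$ reduces to $\Omega_{ij}/\Omega_{kl}$, independently of the normalising constant.

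Second, I would observe that this is verbatim the definition of Wallenius' biased urn scheme, extended multivariately to $n^{2}$ categories. The classical derivation (Wallenius 1963, Fog 2008) of the probability mass function proceeds by attaching to each ball an independent exponential waiting time with rate equal to its weight, drawing balls in order of their waiting times, and then computing the probability that the counts of each colour among the first $m$ drawn balls are $\{A_{ij}\}$. Conditioning on the residual fraction $z$ of a reference weight and integrating over $z\in[0,1]$ yields exactly
\[
\Pr(X=\mathbf{A}) = \left[\prod_{i,j\in V}\binom{\Xi_{ij}}{A_{ij}}\right]\int_{0}^{1}\prod_{i,j\in V}\left(1-z^{\Omega_{ij}/D}\right)^{A_{ij}}\,dz,
\]
where $D$ is the total remaining weight after the draw. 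Substituting our parameters, the combinatorial prefactor counts the number of ways of choosing which $A_{ij}$ specific balls among the $\Xi_{ij}$ in colour $(i,j)$ were drawn, while the total residual weight after observing $\mathbf{A}$ is $D=\sum_{i,j\in V}\Omega_{ij}(\Xi_{ij}-A_{ij})=S_{\mathbf{\Omega}}$, so the expression matches \cref{eq:walleniusNet} exactly.

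The main obstacle is not the combinatorial identification, which is immediate from the construction of $\mathbf\Xi$ in \cref{lem:xidir}, but rather the justification of the integral form of the Wallenius' distribution. Since this is a standard result in the literature on biased sampling without replacement, I would not re-derive it but cite Wallenius1963 and Fog2008a; the substantive content of the theorem is therefore the observation that edge propensities $\Omega_{ij}$ play the role of Wallenius' weights and that the generalised configuration model is nothing but this biased urn problem.
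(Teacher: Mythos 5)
Your proposal is correct and follows essentially the same route as the paper: it maps the propensity-biased, without-replacement sampling onto Wallenius' multivariate biased urn scheme with the ball counts $\Xi_{ij}$ from \cref{lem:xidir} and weights $\Omega_{ij}$, and then invokes the known integral form of the distribution (Wallenius 1963, Chesson 1976/1978, Fog 2008) rather than re-deriving it, exactly as the paper does. Your explicit identification of the normaliser $S_{\mathbf{\Omega}}$ as the total weight of the balls left in the urn, and the sketch of the waiting-time derivation, are slightly more detailed than the paper's proof remark but substantively equivalent.
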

The distribution describing the biased sampling from an urn is a generalisation of the multivariate hypergeometric distribution.
The proof of \cref{thm:ghype} follows from the fact that, when the sampling is performed without replacement with given relative odds, this sampling process corresponds to the multivariate Wallenius' non-central hypergeometric distribution.
Details of this derivation can be found in~\cite{Wallenius1963} for the univariate case, and in~\cite{Chesson1978,chesson1976non} for the multivariate case. 
A thorough review of non-central hypergeometric distributions has been done in~\cite{Fog2008}.

The next two corollaries directly follow from properties of Wallenius' non-central hypergeometric distribution.

\begin{cor}\label{cor:expwall}
	For each pair of vertices \(i,j \in V\), the probability to draw exactly \(A_{ij}\) edges between \(i\) and \(j\) is given by the marginal distributions of the multivariate Wallenius' non-central hypergeometric distribution, i.e.,
	\begin{equation}
		\label{eq:walleniusEdge}
		\begin{aligned}
			\Pr(&X_{ij}=A_{ij}) =  \dbinom{\Xi_{ij}}{A_{ij}}\dbinom{M-\Xi_{ij}}{m-A_{ij}}\cdot  \int_{0}^{1} \left[ \vphantom{e^\frac12}\right.
			&\left(
			1 - z^{ \frac{\Omega_{ij}}{S_{\mathbf{\Omega}}}}
			\right)^{A_{ij}} \left(
			1-z^{ \frac{\bar{\Omega}_{ij}}{S_{\mathbf{\Omega}}}}
			\right)^{m-A_{ij}}
			\left.\vphantom{\vphantom{e^\frac12}}\right]dz
		\end{aligned}
	\end{equation}
	where
	\begin{equation}	
		\bar{\Omega}_{ij} = \frac{\sum_{(l,m)\in
		\left(V\times V\right)\backslash(i,j)}{\Xi_{lm}\Omega_{lm}}}{(M-\Xi_{ij})}.
	\end{equation}
	
\end{cor}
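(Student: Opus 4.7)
The plan is to derive $\Pr(X_{ij}=A_{ij})$ by marginalising the joint distribution of \cref{eq:walleniusNet} over all configurations $\{A_{lm}:(l,m)\neq(i,j)\}$ subject to the edge-count constraint $\sum_{(l,m)\neq(i,j)} A_{lm}=m-A_{ij}$. The structure of \cref{eq:walleniusEdge} is suggestive: it is exactly the univariate Wallenius' non-central hypergeometric distribution one obtains by lumping all the other vertex pairs into a single background category of size $M-\Xi_{ij}$ and effective weight $\bar\Omega_{ij}$. So the proof becomes a verification that marginalisation and lumping coincide for the GHypEG.

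First I would pull the $(i,j)$-dependent factors out of the summation: the binomial $\binom{\Xi_{ij}}{A_{ij}}$ in the prefactor and the integrand factor $(1-z^{\Omega_{ij}/S_{\mathbf\Omega}})^{A_{ij}}$ in \cref{eq:walleniusNet}. What remains is a sum over the background configurations of the product of the remaining binomials and Wallenius factors. For the purely combinatorial piece, iterated Chu--Vandermonde yields
\[
\sum \prod_{(l,m)\neq(i,j)} \binom{\Xi_{lm}}{A_{lm}} = \binom{M-\Xi_{ij}}{m-A_{ij}},
\]
which supplies the second binomial in \cref{eq:walleniusEdge}.

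The integrand step is where the main obstacle lies: I need to collapse $\prod_{(l,m)\neq(i,j)}(1-z^{\Omega_{lm}/S_{\mathbf\Omega}})^{A_{lm}}$, summed over the background, into the single factor $(1-z^{\bar\Omega_{ij}/S_{\mathbf\Omega}})^{m-A_{ij}}$. This is precisely the colour-lumping property of the multivariate Wallenius' non-central hypergeometric distribution. In the sequential urn interpretation, merging several colours into one replaces the collection of per-colour weights with their propensity-weighted average, where the weighting is by ball counts. Applied here, the effective weight of the background category is
\[
\bar\Omega_{ij}=\frac{\sum_{(l,m)\neq(i,j)}\Xi_{lm}\Omega_{lm}}{M-\Xi_{ij}},
\]
which matches the statement of the corollary exactly. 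The underlying derivation of the lumping rule is given in Chesson's work~\cite{Chesson1978,chesson1976non} and reviewed in Fog's survey~\cite{Fog2008}; I would invoke it rather than re-derive it.

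A subtle point I expect to grapple with is the dependence of the normaliser $S_{\mathbf\Omega}$ on the full configuration $\mathbf A$, which in principle couples the $(i,j)$-slot to the background when summing. I would handle this via the scale invariance of the Wallenius integral representation: simultaneous rescaling of all $\Omega_{ij}$ combined with the substitution $z\to z^c$ leaves the integral unchanged, so one may fix a common normaliser before performing the background sum, carry out the lumping, and only then re-express the result in terms of $S_{\mathbf\Omega}$. Once this is in place, assembling the $(i,j)$ factors with the Chu--Vandermonde output and the lumped Wallenius factor reproduces \cref{eq:walleniusEdge}.
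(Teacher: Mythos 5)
Your proposal is essentially the paper's own route: the paper offers no derivation, stating only that the corollary ``directly follows from properties of Wallenius' non-central hypergeometric distribution,'' and your argument, despite the extra scaffolding (factoring out the $(i,j)$ terms, iterated Chu--Vandermonde, the rescaling of the exponent normaliser), ultimately defers the decisive step --- collapsing the background categories into one with the count-weighted average propensity $\bar\Omega_{ij}$ --- to the same marginal/colour-lumping property of the multivariate Wallenius distribution cited from Chesson and Fog. So you are correct in the same sense and to the same depth as the paper, with somewhat more detail about how the marginalisation of \cref{eq:walleniusNet} would be organised.
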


\begin{cor}
	The entries of the expected adjacency matrix \(\expected{ X_{ij}}\) can be obtained by solving the following system of equations:
	\begin{align}\label{eq:walleniusMean}
	\left(1-\frac{\expected{ X_{11}}}{\Xi_{11}}\right)^{\frac{1}{\Omega_{11}}} = \left(1-\frac{\expected{ X_{12}}}{\Xi_{12}}\right)^{\frac{1}{\Omega_{12}}} = \ldots \end{align}
	with the constraint \(\sum_{i,j \in V} \expected{ X_{ij}} = m\).
\end{cor}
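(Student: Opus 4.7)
The plan is to use the continuous-time representation of Wallenius' sampling that is standard in the literature on non-central hypergeometric distributions. Assign to each of the $\Xi_{ij}$ "balls" of colour $(i,j)$ an independent exponential clock with rate $\Omega_{ij}$. Wallenius' biased sampling without replacement is then equivalent to drawing balls in the order of their clock firings; after $m$ draws we stop at a threshold time $\tau$ (the $m$-th order statistic across all clocks). A given ball of colour $(i,j)$ survives (is not drawn) iff its clock exceeds $\tau$, so conditionally on $\tau=t$ the probability that a particular ball of colour $(i,j)$ survives is $e^{-\Omega_{ij}t}$.

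From this representation I would obtain the mean equation in its deterministic (fluid) form, which is the standard Wallenius moment approximation and the sense in which the statement of the corollary must be read. Concretely, letting $x_{ij}(\tau)$ denote the expected remaining fraction of colour $(i,j)$ as a function of the effective threshold $\tau$, one has $x_{ij}(\tau) = e^{-\Omega_{ij}\tau}$. Since the expected number drawn equals $\Xi_{ij}(1 - x_{ij}(\tau))$, we get
\begin{equation*}
\expected{X_{ij}} = \Xi_{ij}\bigl(1 - e^{-\Omega_{ij}\tau}\bigr), \qquad \text{i.e.,}\qquad \left(1-\frac{\expected{X_{ij}}}{\Xi_{ij}}\right)^{1/\Omega_{ij}} = e^{-\tau}.
\end{equation*}
Because the right-hand side is the same constant for every pair $(i,j)$, the chain of equalities in the corollary follows immediately.

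The same identity can be reached by a differential argument avoiding the continuous-time coupling: treating the draw index $n$ as continuous, the expected remaining count $r_{ij}(n)$ obeys $\frac{dr_{ij}}{dn} = -\Omega_{ij}r_{ij}/\bigl(\sum_{k,l}\Omega_{kl}r_{kl}\bigr)$, and dividing gives $d(\ln r_{ij})/\Omega_{ij}$ independent of $(i,j)$. Integrating yields $r_{ij}(n) = \Xi_{ij}e^{-\Omega_{ij}\tau(n)}$ with a common $\tau(n)$, which is exactly the system above. The normalisation constraint $\sum_{i,j\in V}\expected{X_{ij}} = m$ is just the statement that exactly $m$ balls are drawn, and it pins down the common value $e^{-\tau}$ uniquely once $\Xi$ and $\Omega$ are fixed.

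The main obstacle is that the claimed identity is exact only in the fluid/moment-approximation sense used throughout the Wallenius literature; for finite $m$ the expected values satisfy these equations only up to lower-order corrections due to fluctuations of the threshold $\tau$. I would therefore make the status of the approximation explicit and cite~\cite{Wallenius1963,Chesson1978,Fog2008} for the rigorous derivation of the moment equations (where the same relation arises as the solution of the governing ODE system), rather than attempting a fully probabilistic derivation of exact equalities, which do not in fact hold.
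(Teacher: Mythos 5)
Your derivation is correct, and it is in fact more substantive than what the paper offers: the paper states this corollary with no proof at all, merely asserting that it ``directly follows from properties of Wallenius' non-central hypergeometric distribution'' and leaning on the cited literature (Wallenius 1963, Chesson 1976/1978, Fog 2008), where the mean equations are obtained essentially by the argument you reconstruct. Your route --- the exponential-clock coupling giving survival probability $e^{-\Omega_{ij}\tau}$ per ball, equivalently the fluid ODE $\frac{dr_{ij}}{dn} = -\Omega_{ij} r_{ij}\big/\sum_{k,l}\Omega_{kl} r_{kl}$ whose integration makes $\left(1-\expected{X_{ij}}/\Xi_{ij}\right)^{1/\Omega_{ij}}$ a common constant, pinned down by $\sum_{i,j}\expected{X_{ij}}=m$ --- is exactly the standard derivation behind those cited moment equations, so in spirit you and the paper rely on the same underlying fact; the difference is that you actually exhibit the mechanism. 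Your caveat is also well placed: the system \eqref{eq:walleniusMean} characterises the mean of Wallenius' distribution only in the moment-approximation sense (exact for the fluid limit, with lower-order corrections at finite $m$ coming from fluctuations of the stopping threshold), whereas the paper states the corollary as if it were an exact identity. Flagging this, and citing the references for the rigorous treatment, is the right call; there is no gap in your argument relative to what the paper actually establishes.
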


The undirected formulation of the GHypEG follows the same reasoning of \cref{thm:undirected}, with the addition of a symmetric propensity matrix.
That is, the distribution of the undirected generalised hypergeometric ensemble is hence given by a Wallenius' distribution similar to \cref{eq:walleniusNet}, but corresponding to the upper triangular part of the matrices (i.e., for \(i\leq j\)) with \(\mathbf{\Xi}\) defined according to \cref{lem:xiundir}.

\subsection{Estimation of the propensity matrix}
In this final section, we show how to define the propensity matrix \(\mathbf\Omega\) such that the expected graph \(\expected{X}\) from the model \(X\) defined by \(\mathbf\Omega\) coincides with an arbitrary graph \(G\).
By doing so, we create a random model \emph{centered} around the inducing graph.
The result is described in the following corollary, which follows from the properties of Wallenius' non-central hypergeometric distribution.

\begin{cor}
	Let \(\G(V,E)\) be a graph with \(n=\abs{V}\) vertices and \(\mathbf{A}\) its adjacency matrix.
	Let \(\mathbf\Omega\) be a \(n\times n\) propensity matrix, characterised by elements \(\Omega_{ij}\) with \(i,j\in V\).
	Then, \(\G\) coincides with the expectation \(\expected{ X}\) of the GHypEG \(X\) induced by \(\G\) and \(\mathbf\Omega\) if and only if the following relation holds.
	\begin{equation}\label{eq:fitomega}
		\forall c\in\R^-\quad\Omega_{ij} = \frac{1}{c}\log\left(1 - A_{ij} / \Xi_{ij}\right)\;\forall i,j\in V,
	\end{equation}
	Where \(\mathbf\Xi\) is the combinatorial matrix associated with \(X\) and \(\Xi_{ij}\) its elements.
\end{cor}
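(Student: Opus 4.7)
The plan is to read off the identity directly from the previous characterisation of the expected adjacency matrix in \cref{eq:walleniusMean}, and to verify that the stated formula for $\Omega_{ij}$ is equivalent to the requirement $\expected{X_{ij}} = A_{ij}$ for every pair $(i,j) \in V \times V$. Both directions of the equivalence then reduce to a simple algebraic manipulation, together with a short check that the normalisation $\sum_{i,j} A_{ij} = m$ is automatically preserved and that the parameter $c$ is forced to be negative.

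For the forward direction, I would assume $\expected{X} = \mathbf{A}$ and substitute $\expected{X_{ij}} = A_{ij}$ into \cref{eq:walleniusMean}. The coupled equalities then force the quantity $\bigl(1 - A_{ij}/\Xi_{ij}\bigr)^{1/\Omega_{ij}}$ to take the same value on every pair $(i,j)$; denote this common value by $e^c$. Because each base lies in $[0,1]$ and each $\Omega_{ij}$ is positive, this common value belongs to $[0,1]$, so $c \leq 0$; excluding the degenerate saturated case $A_{ij} = \Xi_{ij}$ (which would force $\log 0$) gives $c \in \mathbb{R}^-$. Taking logarithms of $\bigl(1 - A_{ij}/\Xi_{ij}\bigr)^{1/\Omega_{ij}} = e^c$ and solving for $\Omega_{ij}$ yields \cref{eq:fitomega}.

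For the reverse direction, I would assume \cref{eq:fitomega} for some fixed $c \in \mathbb{R}^-$, substitute back into $\bigl(1 - A_{ij}/\Xi_{ij}\bigr)^{1/\Omega_{ij}}$, and observe that this expression evaluates to $e^c$ independently of $(i,j)$. Hence the assignment $\expected{X_{ij}} = A_{ij}$ satisfies all the equalities of \cref{eq:walleniusMean}; combined with the automatic identity $\sum_{i,j\in V} A_{ij} = m$ (since $\mathbf{A}$ is the adjacency matrix of a graph with exactly $m$ multi-edges), this is a solution of the full system determining the expectation. Uniqueness of the expectation of a probability distribution then gives $\expected{X} = \mathbf{A}$.

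The main observation worth emphasising is the role of the free parameter $c$: the propensity matrix is defined only up to a positive multiplicative constant, since any rescaling preserves all odds-ratios $\Omega_{ij}/\Omega_{kl}$. The family parametrised by $c \in \mathbb{R}^-$ in \cref{eq:fitomega} is precisely this one-parameter family of equivalent propensity matrices realising the same expected graph. The only mild obstacle is handling boundary cases: $A_{ij} = 0$ produces $\Omega_{ij} = 0$ (consistent with a forbidden edge, whose contribution to the Wallenius integrand vanishes automatically), while $A_{ij} = \Xi_{ij}$ produces an infinite propensity and must be treated as a limiting case in which the corresponding edge is drawn with probability one.
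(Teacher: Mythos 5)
Your proposal is correct and takes essentially the same route as the paper: both reduce the condition \(\expected{X}=\mathbf{A}\) to the system \cref{eq:walleniusMean}, note that \(\left(1-A_{ij}/\Xi_{ij}\right)^{1/\Omega_{ij}}\) must equal a common constant, and solve for \(\Omega_{ij}\) to obtain \cref{eq:fitomega}. The paper's proof is simply a terser version of yours; your explicit treatment of the reverse direction, the sign of \(c\), and the boundary cases \(A_{ij}=0\) and \(A_{ij}=\Xi_{ij}\) adds detail the paper leaves implicit.
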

\begin{proof}
	\Cref{eq:fitomega} follows directly from \cref{cor:expwall}.
	In particular, when solving \cref{eq:walleniusMean} for \(\mathbf\Omega\) with the assumption \(\expected{X} = \mathbf{A}\) we obtain the following system of \(\abs{V}^2\) equations (in the case of a directed graph with self-loops)  for \(\abs{V}^2+1\) variables.
	\begin{equation}
		\begin{cases}
			\left(1-\frac{\expected{ X_{11}}}{\Xi_{11}}\right)^{\frac{1}{\Omega_{11}}} &= C \\
			\left(1-\frac{\expected{ X_{12}}}{\Xi_{12}}\right)^{\frac{1}{\Omega_{12}}} &= C \\
			&\vdots
		\end{cases}
	\end{equation}
	The solution of this system is \cref{eq:fitomega}.
\end{proof}

A wide range of statistical patterns that go beyond degree effects can be encoded in the graph model by specifying the matrix \(\mathbf{\Omega}\) of edge propensities. 
The encoding and fitting techniques of such arbitrary propensity matrices are beyond the scope of this article, and will not be discussed here.
We refer to~\cite{Casiraghi2017} for a general method to fit external dyadic data as propensities.

Finally, we show that the soft configuration model of \cref{sec:conf} is a special case of generalised hypergeometric graph models.
The soft configuration model described in \cref{thm:softconf} can be in fact recovered from the generalised model by setting all entries in the propensity matrix to the same value.
By doing so, the odds-ratio between the propensities for any pair of vertices is 1, and the edge sampling process is not biased.
Thus, the probability distribution of the model reduces to a function of the degree sequences and the number of multi-edges sampled.

\begin{thm}
	Let \(\mathbf{\Omega} \equiv \text{const}\).
	The corresponding GHypEG coincides with the soft configuration model in \cref{eq:softconf} induced by the same graph.
\end{thm}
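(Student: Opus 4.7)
My plan is to substitute $\Omega_{ij}\equiv c$ directly into the Wallenius distribution from \cref{eq:walleniusNet} and show that the integral collapses to $\binom{M}{m}^{-1}$, thereby recovering the multivariate hypergeometric distribution of \cref{eq:hyper}.

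First, with $\Omega_{ij}=c$ for every $(i,j)\in V\times V$, the normalising constant $S_{\mathbf{\Omega}}$ becomes
\begin{equation*}
S_{\mathbf{\Omega}} = c\sum_{i,j\in V}(\Xi_{ij}-A_{ij}) = c\,(M-m),
\end{equation*}
so that each exponent $\Omega_{ij}/S_{\mathbf{\Omega}}$ reduces to the single value $1/(M-m)$, independent of $(i,j)$. Consequently the product inside the integrand of \cref{eq:walleniusNet} factors neatly:
\begin{equation*}
\prod_{i,j\in V}\left(1-z^{1/(M-m)}\right)^{A_{ij}} = \left(1-z^{1/(M-m)}\right)^{\sum_{i,j\in V}A_{ij}} = \left(1-z^{1/(M-m)}\right)^{m}.
\end{equation*}

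Second, I would evaluate the remaining single-variable integral via the substitution $u=z^{1/(M-m)}$, i.e.\ $z=u^{M-m}$ and $dz=(M-m)u^{M-m-1}du$. This transforms the integral into a standard Beta integral,
\begin{equation*}
\int_{0}^{1}\!\left(1-z^{1/(M-m)}\right)^{m}dz = (M-m)\int_{0}^{1}(1-u)^{m}u^{M-m-1}\,du = (M-m)\,B(m+1,M-m),
\end{equation*}
which evaluates to $(M-m)\,\frac{m!\,(M-m-1)!}{M!}=\frac{m!\,(M-m)!}{M!}=\binom{M}{m}^{-1}$.

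Combining these two steps, \cref{eq:walleniusNet} under $\mathbf{\Omega}\equiv\text{const}$ becomes
\begin{equation*}
\Pr(X=\mathbf{A}) = \binom{M}{m}^{-1}\prod_{i,j\in V}\binom{\Xi_{ij}}{A_{ij}},
\end{equation*}
which is exactly the hypergeometric distribution of \cref{eq:hyper} underlying the soft configuration model. Note that the result is independent of the chosen constant $c$, consistent with the fact that the propensity matrix is defined only up to a multiplicative constant. I do not anticipate a genuine obstacle here; the only subtlety is making the substitution in the integral cleanly and recognising the result as the reciprocal of $\binom{M}{m}$. The undirected analogue would follow by the same computation combined with \cref{thm:equivalence}.
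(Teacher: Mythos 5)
Your proposal is correct and follows essentially the same route as the paper: set all propensities equal, observe that every exponent collapses to $1/(M-m)$ so the integrand becomes $\left(1-z^{1/(M-m)}\right)^m$, and use the identity $\int_0^1\left(1-z^{1/(M-m)}\right)^m dz=\binom{M}{m}^{-1}$ to recover \cref{eq:hyper}. You merely supply the Beta-integral computation that the paper states without derivation, so there is nothing to add.
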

\begin{proof}
	For the special case of a uniform edge propensity matrix \(\mathbf{\Omega} \equiv \text{const}\), which corresponds to an unbiased sampling of edges, for the integral in \cref{eq:walleniusEdge} we have
	\begin{equation}
		\int_{0}^{1}{\left(1-z^{\frac{1}{(M-m)}}\right)^m dz} = \dbinom{M}{m}^{-1}.
		\label{eq:walleniusIntegral}
	\end{equation}
	Plugging this result in \cref{eq:walleniusNet} we thus recover \cref{eq:hyper} for the unbiased case, i.e. where all edge propensities are identical.	
\end{proof}

\section{Final remarks}

We have proposed a novel approach for studying random graphs in terms of an urn problem.
By doing so, we have arrived at an analytically tractable formulation of the widely used configuration model of random graphs.
Furthermore, we have expanded the configuration model to the \emph{generalised hypergeometric ensemble}, which is a whole new class of models that can incorporate arbitrary dyadic biases in the probabilities of sampling edges.
Importantly, the analytical tractability of the generalised hypergeometric ensembles allows for robust model selection and hypothesis testing of various topological patterns in empirical data~\cite{Casiraghi2016}.
Moreover, one can perform a \emph{multiplex network regression}~\cite{Casiraghi2017} based on our model, in order to find the weighted combination of multiple relational layers that best describes the observed multi-graph.
For instance, one can study how different social phenomena and environmental factors influence the topological patterns in repeated social interactions.
The ability to analytically perform such statistical analysis opens new possibilities for the study of complex interaction data, which pervades many scientific disciplines.

\section*{Aknowledgements}
The authors thank Frank Schweitzer for his support and valuable comments, and Ingo Scholtes, Pavlin Mavrodiev, and Christian Zingg for the useful discussions.

\end{document}